\newcounter{lixo}
\newtheorem{theorem}{Theorem}
\newtheorem{corollary}[theorem]{Corollary}
\newtheorem{proposition}[theorem]{Proposition}
\newcommand{\NN}{{\rm\bf N}}
\newcommand{\ZZ}{{\rm\bf Z}}
\newcommand{\RR}{{\rm\bf R}}
\newcommand{\EU}{{\rm\bf S}}
\newcommand{\vv}{{\rm\bf v}}
\newcommand{\ww}{{\rm\bf w}}
\newcommand{\seta}{\longrightarrow}
\newcommand{\dpt}{\displaystyle}
\newcommand{\zg}[1]{\langle\gamma_{#1}\rangle}
\begin{document}

\title[non-hyperbolic dynamics in symmetry breaking]{Partial symmetry breaking and heteroclinic  tangencies}

\keywords{Bykov cycle, symmetry breaking, bifurcation, tangencies, non-hyperbolic dynamics}
\subjclass[2000]{Primary: 37C29; Secondary: 34C28, 37C27, 37C20}

\author[Isabel S. Labouriau]{Isabel S. Labouriau}
\author[Alexandre A. P. Rodrigues]{Alexandre A. P. Rodrigues}

\address{Centro de Matem\'atica da Universidade do Porto\\ 
and Faculdade de Ci\^encias da Universidade do Porto\\
Rua do Campo Alegre 687, 4169--007 Porto, Portugal}

\email[ I.S.Labouriau]{islabour@fc.up.pt}
\email[A.A.P.Rodrigues]{alexandre.rodrigues@fc.up.pt}

\thanks{CMUP is supported by the European Regional Development Fund through the programme COMPETE and by the
Portuguese Government through the Funda\c{c}\~ao para a Ci\^encia e a Tecnologia (FCT) under the project PEst-C/MAT/UI0144/2011.
A.A.P. Rodrigues was supported by the grant SFRH/BPD/84709/2012 of FCT}

\begin{abstract}
We study some global aspects of the bifurcation of an equivariant family of volume-contracting vector fields on the  three-dimensional sphere.
When  part of the symmetry is broken, the vector fields exhibit Bykov cycles. 
Close to the symmetry, we  investigate the  mechanism  of the emergence of heteroclinic tangencies coexisting with transverse connections.
We find persistent suspended horseshoes  accompanied by attracting periodic trajectories with long periods. \end{abstract}

\maketitle

\section{Introduction}
Heteroclinic cycles and networks associated to equilibria, periodic solutions and chaotic sets may be responsible for intermittent dynamics in nonlinear systems. Heteroclinic cycles may also be seen as the skeleton for the understanding of complicated switching between physical states -- see Field \cite{Field}, Golubitsky and Stewart \cite{GS} and Melbourne \emph{et al} \cite{MPR}. 

The homoclinic cycle associated to a saddle-focus \cite{Homburg} provides one of the main examples for the occurrence of chaos involving suspended hyperbolic horseshoes and strange attractors; the complexity of the dynamics near these cycles has been discovered by the  pionner L. P. Shilnikov \cite{Shilnikov63, Shilnikov65, Shilnikov67A}. The simplest heteroclinic cycles between two saddle-foci of different Morse indices where one heteroclinic connection is structurally stable and the other is not have been first studied by Bykov \cite{Bykov} and are thus called \emph{Bykov cycles}. Recently there has been a renewal of interest of this type of heteroclinic bifurcation in different contexts -- see \cite{Homburg, LR, Rodrigues3} and references therein. We also refer Lamb \emph{et al} \cite{Lamb2005} who have studied Bykov cycles in the context of reversible systems.

Explicit examples of vector fields for which such cycles may be found  are reported in Aguiar \emph{et al} \cite{ACL2} and Rodrigues and Labouriau \cite{LR2}. These examples start with a differential equation with symmetry, $\dot{x}=f_0(x)$  whose flow has a globally attracting three-dimensional sphere, containing an asymptotically stable heteroclinic network with two saddle-foci. When part of the symmetry is destroyed by a small non-equivariant perturbation, it may be shown by the Melnikov method that the two-dimensional invariant manifolds intersect transversely. When some symmetry remains, the connection of the one-dimensonal manifolds is preserved, giving rise to Bykov cycles forming a network.

The main goal of this article is to describe and characterize the transition from the dynamics of the flow of the fully symmetric system $\dot{x}=f_0(x)$ and the perturbed system $\dot{x}=f_\lambda(x)$, for small $\lambda \neq 0$. For $\dot{x}=f_0(x)$, there is a heteroclinic network $\Sigma^0$ whose basin of attraction has positive Lebesgue measure.   When $\lambda\neq 0$, the intersection of the invariant manifolds is transverse giving rise to a network $\Sigma^\star$ that cannot be removed by any small smooth perturbation. The transverse intersection implies that the set of all
trajectories that lie for all time in a small neighbourhood of $\Sigma^\star$ has a locally-maximal hyperbolic set, admiting a
complete description in terms of symbolic dynamics \cite{Shilnikov67}. Labouriau and Rodrigues \cite{LR} proved that for the perturbed system $\dot{x}=f_\lambda(x)$, the flow contains a Bykov network $\Sigma^\star$ and uniformly hyperbolic horseshoes accumulating on it.

Suppose the fully symmetric network $\Sigma^0$ is asymptotically stable and let $V^0$ be a neighbourhood of $\Sigma^0$ whose closure is compact and positively flow-invariant; hence it contains the $\omega$--limit sets of all its trajectories. The union of these limit sets is a maximal invariant set in $V^0$. For $\dot{x}=f_0(x)$, this union is simply the network $\Sigma_0$. For symmetry-breaking perturbations of $f_0$ it contains, but does not coincide with, a nonwandering set $\Omega_\lambda(\Sigma^\star)$ of trajectories that remain close to $\Sigma^\star$, the suspension of horseshoes accumulating on $\Sigma^\star$. The goal of this article is to investigate the larger limit set that contains nontrivial hyperbolic subsets and
attracting  limit cycles with long  periods in $V^0$. This is what Gonchenko \emph{et al} \cite{Gonchenko2007} call a \emph{strange attractor}: an attracting limit set containing nontrivial hyperbolic subsets as well as attracting periodic solutions of extremely long periods.

When $\lambda \rightarrow 0$, the horseshoes in $\Omega_\lambda(\Sigma^\star)$ lose hyperbolicity giving rise to \emph{heteroclinic tangencies} with infinitely many sinks nearby. 
A classical problem in this context is the study of heteroclinic bifurcations that lead to the
birth of stable periodic sinks -- see Afraimovich and Shilnikov \cite{Afraimovich83} and Newhouse \cite{Newhouse74, Newhouse79}. 
When we deal
with a heteroclinic tangency of the invariant manifolds, the 
description of all solutions that lie near the cycle for all time becomes more difficult. The
problem of a \emph{complete description} is unsolvable: the source of the difficulty is that arbitrarily small perturbations of any
differential equation with a quadratic homo/heteroclinic tangency (the simplest situation) may lead
to the creation of new tangencies of higher order, and to the birth of degenerate periodic orbits --- Gonchenko \cite{Gonchenko2012}.  

Large-scale invariant sets of planar Poincar\'e maps vary discontinuously in size under
small perturbations. Global bifurcations of observable sets, such as the emergence of attractors
or metamorphoses of their basin boundaries, are easily detected numerically and regularly described. However, in the example described in \cite{LR2}, the global bifurcation from a neighbourhood of $\Sigma^\star$ to $V^0$ is still a big mistery. The present paper contributes to a better understanding of the transition between uniform hyperbolicity (Smale horseshoes with infinitely many slabs) and the emergence of heteroclinic tangencies in a dissipative system close the symmetry.

\subsection*{Framework of the paper}
This paper is organised as follows. In section~\ref{object} we state our main result and 
 review some of our recent  results related to the object of study, after some basic definitions  given in section \ref{preliminaries}. 
 The coordinates and other notation  used in the rest of the article are presented in  section~\ref{localdyn}, where we also  obtain a geometrical description of the way the flow transforms a curve of initial conditions lying across the stable manifold of an equilibrium. 
In section \ref{hyperbolicity}, we investigate the limit set that contains nontrivial hyperbolic subsets and we explain how the horseshoes in $\Omega_\lambda(\Sigma^\star)$ lose hyperbolicity, as $\lambda \rightarrow 0$. 
The first obstacle towards hyperbolicity is the emergence of tangencies and the existence of thick suspended Cantor sets near the network. In section \ref{sec tangency}, we prove that there is a sequence of parameter values $\lambda_i$ accumulating on $0$ such that the flow of $f_{\lambda_i}$ has  heteroclinic tangencies and thus infinitely many attracting periodic trajectories. We include in section \ref{Conclusion} a short conclusion about the results.

\section{Preliminaries}
\label{preliminaries} 
Let $f$ be a $C^1$ vector field on $\RR^{n}$ with flow given by 
 the unique solution  $x(t)=\varphi(t,x_{0})\in \RR^{n}$ of $\dot{x}=f(x)$ and $ x(0)=x_{0}$.
Given two equilibria $p$ and $q$, an $m$-dimensional \emph{
heteroclinic connection }from $p$ to $q$, denoted 
$[p\rightarrow q]$, is an $m$-dimensional connected flow-invariant 
manifold contained in $W^{u}(p)\cap W^{s}(q)$.  There may be more than one trajectory connecting $p$ and $q$.

Let $\mathcal{S=}\{p_{j}:j\in \{1,\ldots,k\}\}$ be a finite ordered set of
mutually disjoint invariant saddles.
Following Field \cite{Field}, we say that there is a 
{\em heteroclinic cycle }associated to $\mathcal{S}$ if 
\begin{equation*}
\forall j\in \{1,\ldots,k\},W^{u}(p_{j})\cap W^{s}(p_{j+1})\neq
\emptyset \pmod k .
\end{equation*}
Sometimes we refer to the equilibria defining the heteroclinic cycle as \emph{nodes}. A \emph{heteroclinic network} is a finite connected union of heteroclinic cycles.  Throughout this article, all nodes will be hyperbolic; the dimension of the local unstable manifold of an equilibria $p$ will be called the \emph{Morse index} of $p$. 

In a three-dimensional manifold, a \emph{Bykov cycle} is a heteroclinic cycle associated to two hyperbolic saddle-foci with different Morse indices, in which the one-dimensional manifolds coincide and the two-dimensional invariant manifolds have a transverse intersection.  It arises as a bifurcation of codimension 2 and it is also called by \emph{$T$--point}.

Let $\mu$ denote a measure on a smooth manifold $M$ locally equivalent to the Lebesgue measure on charts.
 Given $x \in M$, let
$$
\omega(x) = \bigcap_{T > 0} \overline{\{\varphi(t,x) :  t \ge T\}}
$$
denote the $\omega$-limit set of the solution through $x$. If $X\subset M$ is a compact and flow-invariant subset, we let $\mathcal{B}(X) = \{x \in M : \omega(x) \subset X\}$ denote the \emph{basin of attraction} of $X$.  A compact invariant subset $X$ of $M$ is a \emph{Milnor attractor} if
$\mu(\mathcal{B}(X)) > 0$ and for any proper compact invariant subset $Y$ of $X$, $\mu(\mathcal{B}(X)\smallsetminus \mathcal{B}(Y)) > 0$.

\section{The object of study}
\label{object}
\subsection{The organising centre}
The starting point of the analysis is  a  differential equation $\dot{x}=f_0(x)$ on the unit sphere $\EU^3 =\{X=(x_1,x_2,x_3,x_4) \in \RR^4: ||X||=1\}$
where $f_0: \EU^3 \rightarrow \mathbf{T}\EU^3$ is a $C^1$ vector field  with the following properties:
\bigbreak
\begin{enumerate}
\renewcommand{\labelenumi}{(P{\theenumi})}
\item\label{P1} 
The vector field $f_0$  is equivariant under the action of $ \ZZ_2 \oplus \ZZ_2$ on 
$\EU^3$ induced by the two linear maps on $\RR^4$:
$$
\gamma_1(x_1,x_2,x_3,x_4)=(-x_1,-x_2,x_3,x_4)  \qquad \text{and} \qquad \gamma_2(x_1,x_2,x_3,x_4)=(x_1,x_2,-x_3,x_4).
$$
\item\label{P2} 
The set $Fix( \ZZ_2 \oplus \ZZ_2)=\{x \in \EU^3:\gamma_1 x=\gamma_2 x = x \}$ consists of two
equilibria $\vv=(0,0,0,1)$ and $\ww=(0,0,0,-1)$ that  are hyperbolic saddle-foci, 
 where:
\begin{itemize}
\item 
the eigenvalues of $df_0(\vv)$ are
$-C_{\vv } \pm \alpha_{\vv }i$ and $E_{\vv }$ with $\alpha_{\vv } \neq 0$, $C_{\vv }>E_{\vv }>0$ 
\item 
the eigenvalues of $df_0(\ww)$ are
$E_{\ww } \pm \alpha_{\ww } i$ and  $-C_{\ww }$ with $\alpha_{\ww } \neq 0$, $C_{\ww }>E_{\ww }>0$.
\end{itemize}
\medbreak
\item\label{P3} 
The flow-invariant circle $Fix(\zg{1})=\{x \in \EU^3:\gamma_1 x = x \}$ consists of the two equilibria 
$\vv$ and $\ww$,  a source and a sink,
respectively, and two heteroclinic trajectories 
 from $\vv$ to $\ww$ that we denote by $[\vv \rightarrow \ww]$. 
 \medbreak

\item\label{P4} 
The  $f_0$-invariant sphere $Fix(\zg{2})=\{x \in \EU^3:\gamma_2 x = x \}$ consists of the two equilibria $\vv$ and $\ww$, 
and a two-dimensional heteroclinic connection from $\ww$ to $\vv$.
Together with the connections in (P\ref{P3}) this forms a  heteroclinic network that we denote by $\Sigma^0$.
\medbreak
\item\label{P6} 
For sufficiently small open neighbourhoods $V$ and $W$ of $\vv$ and $\ww$, respectively, given any trajectory $\varphi$ going once from $V$ to $W$, if one joins the starting point of $\varphi$ in $\partial V$ to the end point in $\partial W$ by a line segment, one obtains a closed curve that is linked to  $\Sigma^0$ (figure \ref{orientations}).

\setcounter{lixo}{\value{enumi}}
\end{enumerate}

\begin{figure}
\begin{center}
\includegraphics[height=4.6cm]{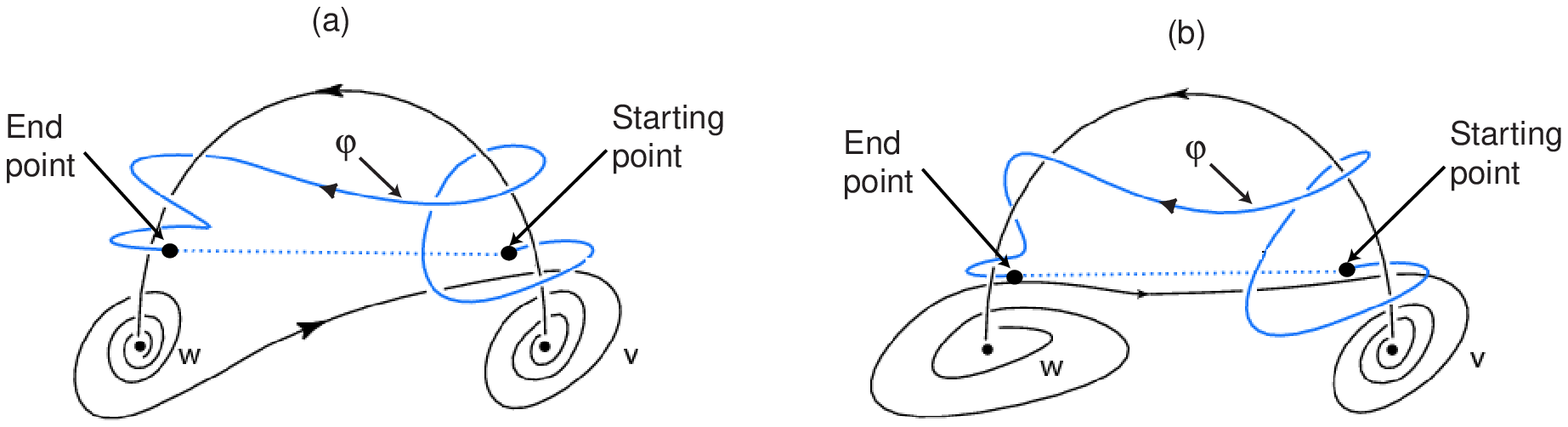}
\end{center}
\caption{\small There are two different possibilities for the geometry of the flow around a Bykov cycle  depending on the direction trajectories turn around the heteroclinic connection $[\vv \rightarrow \ww]$. 
We assume here that all trajectories   turn in the same direction   near $\vv$ and near $\ww$ as  drawn on the left (a).
When the endpoints of the trajectory are joined, the closed curve is linked to the cycle.
The case where trajectories turn in opposite directions around the connection is shown in (b). 
In this case, when the endpoints of the trajectory are joined, the closed curve may not be linked to the cycle.} 
\label{orientations}
\end{figure}

Condition (P\ref{P6}) means that the curve $\varphi$ and the cycle $\Sigma^0$ cannot be separated by an isotopy.  This property is persistent under perturbations: if it holds for the organising centre $f_0$, then it is still valid for vector fields near it, as long as the heteroclinic connection remains. 
An explicit example of a family of differential equations where this assumption is valid is constructed in Rodrigues and Labouriau \cite{LR2}.

\subsection{The heteroclinic network of the organising centre}

The heteroclinic connections in the network $\Sigma^0$ are contained in fixed point subspaces satisfying the hypothesis (H1) of Krupa and Melbourne \cite{KM1}. Since the inequality $C_\vv C_\ww >E_\vv E_\ww$ holds, the Krupa and Melbourne stability criterion  \cite{KM1}  may be applied to $\Sigma^0$ and we have: 

\begin{proposition}\label{propNetworkIstStable}
Under conditions (P\ref{P1})--(P\ref{P4})
the heteroclinic network $\Sigma^0$  is asymptotically stable. 
\end{proposition}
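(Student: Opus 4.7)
The plan is to verify the hypotheses of the Krupa--Melbourne stability theorem \cite{KM1} and then invoke it directly, as flagged in the paragraph preceding the statement. Two ingredients must be checked: the geometric/symmetry hypothesis (H1), and the eigenvalue inequality that makes the associated stability product exceed one.

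For (H1), I would observe that each connection of $\Sigma^0$ sits inside a fixed-point subspace of a subgroup of $\ZZ_2\oplus\ZZ_2$. By (P\ref{P3}), the one-dimensional connection $[\vv\rightarrow\ww]$ lies in the flow-invariant circle $Fix(\zg{1})$ with $\vv$ a source and $\ww$ a sink. By (P\ref{P4}), the two-dimensional connection $[\ww\rightarrow\vv]$ lies in the flow-invariant sphere $Fix(\zg{2})$; from (P\ref{P2}) the linearization restricted to this sphere has eigenvalues $E_\ww\pm\alpha_\ww i$ at $\ww$ (unstable focus, so $\ww$ is a source in the sphere) and $-C_\vv\pm\alpha_\vv i$ at $\vv$ (stable focus, so $\vv$ is a sink in the sphere). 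Each connection is therefore of source-to-sink type inside its isotropy subspace, which is precisely the \emph{simple} cycle setup required in (H1) of \cite{KM1}.

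For the stability product, the transverse contracting/expanding eigenvalues along the cycle $\vv\rightarrow\ww\rightarrow\vv$ are $C_\vv,E_\vv$ at $\vv$ and $C_\ww,E_\ww$ at $\ww$. The Krupa--Melbourne criterion reduces to checking
\[
\rho\;=\;\frac{C_\vv}{E_\vv}\cdot\frac{C_\ww}{E_\ww}\;>\;1,
\]
which is immediate from (P\ref{P2}) since each factor already exceeds one; equivalently $C_\vv C_\ww>E_\vv E_\ww$, exactly the inequality stated in the text. The conclusion of \cite{KM1} then yields asymptotic stability of $\Sigma^0$.

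The main subtlety is not arithmetic but geometric: one must argue that \cite{KM1} applies to the whole network $\Sigma^0$ and not merely to a single cycle inside it, i.e.\ that no extra heteroclinic orbit leaks out of the union of the fixed-point subspaces and spoils stability. Here this is not a real obstacle, because (P\ref{P3})--(P\ref{P4}) enumerate all heteroclinic trajectories, and $\Sigma^0$ reduces to a single robustly isolated cycle lying in $Fix(\zg{1})\cup Fix(\zg{2})$; the cited criterion then applies without modification.
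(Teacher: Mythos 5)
Your proposal is correct and follows exactly the route the paper takes: the text preceding the proposition simply notes that the connections lie in fixed-point subspaces satisfying hypothesis (H1) of Krupa and Melbourne and that $C_\vv C_\ww>E_\vv E_\ww$ (immediate from (P2)), and then invokes their stability criterion. Your write-up just makes explicit the source--sink structure within each isotropy subspace and the reduction of the criterion to the product inequality, which is a faithful elaboration of the paper's one-line justification.
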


In particular, we obtain:

\begin{corollary}
The basin of attraction of the heteroclinic network $\Sigma^0$ whose vector field satisfies (P\ref{P1})--(P\ref{P4}) has positive Lebesgue measure.
\end{corollary}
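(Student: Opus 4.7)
The plan is to deduce the corollary directly from Proposition~\ref{propNetworkIstStable}. Asymptotic stability of a compact invariant set $\Sigma^0$ on the smooth manifold $\EU^3$ means, by definition, that $\Sigma^0$ is Lyapunov stable and there exists an open neighbourhood $U$ of $\Sigma^0$ such that every trajectory starting in $U$ satisfies $\omega(x)\subset\Sigma^0$. In particular, $U\subset\mathcal{B}(\Sigma^0)$.

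The remaining step is to note that $U$ is a non-empty open subset of the three-dimensional sphere $\EU^3$ and the measure $\mu$ under consideration is locally equivalent to Lebesgue measure. Since every non-empty open subset of a smooth manifold has positive Riemannian (and hence Lebesgue) volume, we conclude $\mu(U)>0$. By monotonicity,
\[
\mu\bigl(\mathcal{B}(\Sigma^0)\bigr)\ge\mu(U)>0,
\]
which is the assertion of the corollary.

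There is no real obstacle here: the corollary is essentially a one-line observation once Proposition~\ref{propNetworkIstStable} is in hand, namely that an asymptotically stable compact invariant set automatically has a basin of attraction of positive measure because it contains an open neighbourhood. The substantive work lies entirely in the proposition itself (verifying the Krupa--Melbourne criterion \cite{KM1} via the inequality $C_\vv C_\ww>E_\vv E_\ww$ together with hypotheses (P\ref{P1})--(P\ref{P4})), which is already asserted. The corollary is just the translation of topological asymptotic stability into the measure-theoretic statement needed later in the paper, where the goal is to compare $\mathcal{B}(\Sigma^0)$ with the more delicate limit sets $\Omega_\lambda(\Sigma^\star)$ arising after symmetry breaking.
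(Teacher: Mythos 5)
Your argument is correct and matches the paper's (implicit) reasoning exactly: the corollary is stated as an immediate consequence of Proposition~\ref{propNetworkIstStable}, since asymptotic stability provides an open neighbourhood contained in the basin, and open subsets of $\EU^3$ have positive Lebesgue measure.
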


Proposition~\ref{propNetworkIstStable}  implies that there exists an open neighbourhood $V^0$ of the network $\Sigma^0$ such that every trajectory starting in $V^0$ remains in it for all positive time and is forward asymptotic to the network. 
The neighbourhood may be taken to have its boundary transverse to the vector field $f_0$.

The fixed point hyperplane defined by $Fix(\zg{2})=\{(x_1, x_2, x_3, x_4) \in \EU^3: x_3=0\}$ divides $\EU^3$ in two flow-invariant connected components, preventing jumps between the two cycles in $\Sigma^0$.
Due to the $\zg{2}$--symmetry, trajectories whose initial condition lies outside the invariant subspaces will approach  one of the cycles in positive time. 
Successive visits to both cycles require breaking this symmetry.

\subsection{Bykov cycles}

Since $\vv$ and $\ww$ are hyperbolic equilibria, then any vector field close to it in the $C^1$ topology still has two equilibria  $\vv$ and $\ww$  with eigenvalues satisfying (P\ref{P2}). The dimensions of the local stable and unstable manifolds of $\vv$ and $\ww$ do not change.
 If  we  retain the symmetry $\gamma_1$, then the one-dimensional connection of (P\ref{P3}) remains, as it takes place in the flow-invariant circle $Fix(\zg{1})$, but generically when the symmetry  
 $\gamma_2$ is broken, the two dimensional heteroclinic connection is destroyed, since the fixed point subset $Fix(\textbf{Z}_2(\zg{2}))$ is no longer flow-invariant. 
Generically,  the invariant two-dimensional manifolds  meet transversely at two  trajectories,
 for small $\zg{1}$-equivariant perturbations of the vector field.
We denote these heteroclinic connections by $[\ww \rightarrow \vv]$
 This gives rise to a network $\Sigma^*$ consisting of four copies of the simplest heteroclinic cycle between two saddle-foci of different Morse indices, where one heteroclinic connection is structurally stable and the other is not. This cycle is called a \emph{Bykov cycle}. 
 Note that the networks $\Sigma^0$ and $\Sigma^*$ are not of the same nature. 

Transversality ensures  that the neighbourhood $V^0$ is  still  positively invariant for vector fields $C^1$ close to  $f_0$  and contains the network $\Sigma^*$.
Since the closure of $V^0$ is compact and positively invariant it contains  the $\omega$-limit sets of all its trajectories. The union of these limit sets is a maximal invariant set in $V^0$.
For $f_0$, this is the cycle $\Sigma^0$, by Proposition~\ref{propNetworkIstStable}, whereas for symmetry-breaking perturbations of $f_0$ it contains $\Sigma^*$ but does not coincide with it.
From now on, our aim is to obtain information on this invariant set.

A  systematic study of the  dynamics in a  neighbourhood  of the  Bykov cycles in $\Sigma^*$ was carried out in Aguiar {\sl et al}~\cite{ACL NONLINEARITY},  
 Labouriau and Rodrigues \cite{LR} and Rodrigues \cite{Rodrigues3};
 we proceed to review these  local results.
In  the next section we will discuss some global aspects of the dynamics. 

Given a Bykov cycle $\Gamma$ involving $\vv$ and $\ww$, let $U_\vv$ and $U_\ww \subset V^0$  be disjoint neighbourhoods of these points.
 Let $S_p$ and $S_q$ be local cross-sections of $f_\lambda$ at  two points $p$ and $q$ in the connections $[\vv\rightarrow\ww]$ and 
 $[\ww\rightarrow\vv]$,  respectively, with $p, q\not\in U_\vv\cup U_\ww$.
 Saturating the cross-sections by the flow, one obtains two flow-invariant tubes joining $U_\vv$ and $U_\ww$ containing the connections in their interior. 
 We call the union of these tubes, $U_\vv$ and $U_\ww$ a \emph{tubular neighbourhood} 
 $V^\Gamma$ of the Bykov cycle.

 With these conventions we have:
 
 \begin{theorem}[{Labouriau and Rodrigues}\cite{LR}, 2012]
\label{teorema T-point switching}
If a vector field $f_0$
satisfies (P\ref{P1})--(P\ref{P6}), then the following properties are satisfied generically by  vector fields in an open neighbourhood of $f_0$  in the space of 
$\zg{1}$--equivariant vector fields of class $C^1$ on $\EU^3$:
\begin{enumerate}
\item\label{item0}
there is a heteroclinic network $\Sigma^*$ consisting of four Bykov cycles involving two equilibria 
$\vv$ and $\ww$, two heteroclinic connections $[\vv\rightarrow\ww]$ and two heteroclinic connections
$[\ww\rightarrow\vv]$;
\item \label{item1} 
the only heteroclinic connections from $\vv $ to $\ww $ are those in the Bykov cycles and there are no homoclinic connections;
\item \label{item6}
any tubular neighbourhood $V^\Gamma$ of a Bykov cycle $\Gamma$ in $\Sigma^*$ contains points not lying on $\Gamma$ whose trajectories remain in $V^\Gamma$ for all time;
\item \label{item4} 
any tubular neighbourhood $V^\Gamma$ of a Bykov cycle $\Gamma$ in $\Sigma^*$ contains at least one $n$-pulse heteroclinic connection $[\ww\to\vv]$;
\item \label{item5}
given a  cross-section $S_q\subset V^\Gamma$ at a point  $q$ in $[\ww\rightarrow\vv]$, there exist sets of points such that the dynamics of the first return to  $S_q$ is  uniformly hyperbolic and conjugate to a full shift over a finite number of symbols. These sets  accumulate on the cycle.
\end{enumerate}
\end{theorem}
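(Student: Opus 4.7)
The plan is to establish the five items by combining symmetry-restricted genericity, transversality and a Shilnikov-type geometric analysis of the first-return map near each Bykov cycle.

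For items (\ref{item0}) and (\ref{item1}), I would begin by observing that the one-dimensional connection $[\vv\to\ww]$ of (P\ref{P3}) is structurally stable within the class of $\zg{1}$-equivariant vector fields: it lies in the flow-invariant circle $Fix(\zg{1})$ joining the source $\vv$ to the sink $\ww$, and this circle persists with the symmetry. A generic $\zg{1}$-equivariant perturbation breaks the two-dimensional connection of (P\ref{P4}); a transversality (Melnikov-type) argument, using that $W^u(\ww)$ and $W^s(\vv)$ are both two-dimensional in $\EU^3$, shows that their intersection becomes one-dimensional and transverse. Because this intersection set is invariant under $\gamma_1$, which fixes $\vv$ and $\ww$, it consists of $\gamma_1$-paired heteroclinic trajectories, generically exactly two. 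Combined with the two $[\vv\to\ww]$ trajectories, this yields four Bykov cycles and proves (\ref{item0}). For (\ref{item1}) I would invoke a Kupka--Smale argument in the space of $\zg{1}$-equivariant $C^1$ vector fields: a homoclinic trajectory at $\vv$ or $\ww$ is not forced by the remaining symmetry and hence is removable by arbitrarily small equivariant perturbations; likewise any further $[\vv\to\ww]$ connection would have to lie in the one-dimensional subspace $Fix(\zg{1})$, where only the two listed trajectories are possible.

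For items (\ref{item6}), (\ref{item4}) and (\ref{item5}), I would fix one Bykov cycle $\Gamma\subset\Sigma^*$, choose linearising cylindrical coordinates in small neighbourhoods $U_\vv$ and $U_\ww$ of the saddle-foci, and derive local maps between in- and out-walls using the eigenvalues in (P\ref{P2}). These are composed with two transition maps realised by the flow along the heteroclinic connections to produce the first-return Poincar\'e map $\mathcal{R}$ on a cross-section $S_q$ to $[\ww\to\vv]$. The linking condition (P\ref{P6}) forces the image under $\mathcal{R}$ of a fundamental horizontal segment to wind infinitely often around $S_q$, producing a nested sequence of horizontal strips that accumulate on the trace of $W^s(\vv)$. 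The saddle-value inequalities $C_\vv>E_\vv$ and $C_\ww>E_\ww$ yield strong contraction along one direction and sufficient expansion along the transverse one, so that $\mathcal{R}$ admits an invariant cone field on each strip and its restriction is conjugate to a full shift on finitely many symbols; this gives (\ref{item5}), and (\ref{item6}) follows at once from the non-trivial recurrence inside these horseshoes. For (\ref{item4}), I would iterate a small transverse piece of $W^u(\ww)$ under $\mathcal{R}$ and apply the Inclination Lemma: the accumulation of $W^u(\ww)$ on $W^s(\vv)$ together with (P\ref{P6}) produces, for each integer $n$, at least one transverse intersection of the $n$-th iterate with $W^s(\vv)$, corresponding to an $n$-pulse heteroclinic trajectory that makes $n$ loops inside $V^\Gamma$ before approaching $\vv$.

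The main obstacle I anticipate is the construction of the horseshoes in item (\ref{item5}). The difficulty lies in matching the local maps near the saddle-foci, whose rotational components $\alpha_\vv$ and $\alpha_\ww$ make trajectories spiral around the connections, with the global transition maps, and in verifying that the resulting return map has an invariant cone field on infinitely many disjoint strips arbitrarily close to $\Gamma$. The linking hypothesis (P\ref{P6}) is precisely what prevents the two rotations from cancelling and ensures the strips are geometrically separated rather than laid flat against each other; controlling this interaction quantitatively, and showing uniform hyperbolicity on each strip in spite of the spiralling, is the delicate point where the symmetry, the transverse intersection and the Shilnikov-type estimates must all be combined.
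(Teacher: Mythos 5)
Your proposal is correct and follows essentially the same route as the paper (and the cited source \cite{LR}): the $[\vv\to\ww]$ connection persists inside $Fix(\zg{1})$, generic transversality of the two-dimensional manifolds yields the two $[\ww\to\vv]$ connections and hence the four Bykov cycles, and the horseshoes, $n$-pulse connections and recurrence all come from composing the linearised local maps with the global transitions into the return map $\eta(x,y)=(x-K\ln y,\,y^{\delta})$, whose segment--spiral--helix geometry under (P\ref{P6}) produces strips crossing $W^s_{loc}(\vv)$ transversely exactly as in Propositions~\ref{Structures} and~\ref{propLines}. The delicate point you flag (uniform cone fields on the spiralling strips) is indeed where the quantitative work of \cite{LR} lies, so no gap to report.
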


Notice that  assertion~\eqref{item4} of Theorem~\ref{teorema T-point switching}
 implies the existence of a bigger network:
beyond the original transverse connections $[\ww\rightarrow\vv]$, there exist infinitely many subsidiary heteroclinic connections turning around the original Bykov cycle. 
We will restrict our study to one cycle.

It is a folklore result that a hyperbolic invariant set of a $C^2$--diffeomorphism has zero Lebesgue measure -- see Bowen \cite{Bowen75}. However, since the authors of \cite{LR} worked in the $C^1$ category, this chain of horseshoes might have positive Lebesgue measure as the ``fat Bowen horseshoe'' described in \cite{Bowen}. Rodrigues \cite{Rodrigues3} proved that this is not the case:
\medbreak

\begin{theorem}[Rodrigues \cite{Rodrigues3}, 2013]
\label{zero measure}
Let ${V}^\Gamma$  be a tubular neighbourhood  of one of the Bykov cycles $\Gamma$ of 
Theorem~\ref{teorema T-point switching}.
Then in any cross-section $S_q\subset V^\Gamma$ at a point  $q$ in $[\ww\rightarrow\vv]$ the set of initial conditions in $S_q \cap V^\Gamma$ that do not leave $V^\Gamma$ for all time has zero Lebesgue measure.
\end{theorem}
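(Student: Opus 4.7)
The plan is to represent the dynamics inside $V^\Gamma$ by the first-return map $\mathcal{F}$ to $S_q$, to show that $\mathcal{F}$ is uniformly area-contracting on its domain, and to conclude via a change-of-variables argument that the orbit-invariant subset of $S_q\cap V^\Gamma$ has zero two-dimensional Lebesgue measure.

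First I would choose local $C^1$-linearising coordinates near $\vv$ and $\ww$, together with cylindrical cross-sections $\Sigma_\vv^{\text{in}}\simeq S_q$, $\Sigma_\vv^{\text{out}}$, $\Sigma_\ww^{\text{in}}$, $\Sigma_\ww^{\text{out}}$ on the four legs of $\Gamma$, and write the return map as $\mathcal{F}=\Psi_\ww\circ\Pi_\ww\circ\Psi_\vv\circ\Pi_\vv$. Here $\Pi_\vv,\Pi_\ww$ are the local passage maps read directly from the linearised flows, while $\Psi_\vv,\Psi_\ww$ are transition diffeomorphisms along the connections $[\vv\rightarrow\ww]$ and $[\ww\rightarrow\vv]$, whose Jacobians are bounded above and below by positive constants. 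The domain $D$ of $\mathcal{F}$ is open in $S_q$ and consists of countably many spiralling strips accumulating on the smooth curve $W^s(\vv)\cap S_q$, whose points never return.

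Second, I would estimate $|\det D\mathcal{F}|$ from the explicit form of the linearised flow. Setting $\delta_\vv=C_\vv/E_\vv$ and $\delta_\ww=C_\ww/E_\ww$, both $>1$ by (P\ref{P2}), the Jacobians of $\Pi_\vv$ and $\Pi_\ww$ (in natural polar-type coordinates) scale as positive powers of the distance to the respective local stable manifolds, with exponents governed by $\delta_\vv-1$ and $\delta_\ww-1$. Composing with the bounded transition factors and invoking the saddle-value inequality $\delta_\vv\delta_\ww=C_\vv C_\ww/(E_\vv E_\ww)>1$, one obtains
\[
|\det D\mathcal{F}(x)|\le\lambda<1
\]
on $D$, possibly after replacing $\mathcal{F}$ by a sufficiently high iterate $\mathcal{F}^k$. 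The main obstacle lies precisely here: the Jacobian is exponentially small near the singular curve $W^s(\vv)\cap S_q$, where the return time diverges, but only moderately contracting on the short-return-time portion of $D$ far from this curve, so obtaining a single uniform constant $\lambda<1$ requires either a careful simultaneous control of all strips or passage to an iterate whose average contraction rate dominates.

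Finally, let $\Omega=\bigcap_{n\in\ZZ}\mathcal{F}^n(D)\subset S_q$ be the maximal $\mathcal{F}$-invariant subset. Since $\mathcal{F}$ is a diffeomorphism of $D$ onto $\mathcal{F}(D)$ and $\mathcal{F}(\Omega)=\Omega$, the change-of-variables formula yields
\[
\text{area}(\Omega)=\text{area}(\mathcal{F}(\Omega))=\int_\Omega|\det D\mathcal{F}|\,dA\le\lambda\cdot\text{area}(\Omega),
\]
so $\text{area}(\Omega)=0$. Any point $x\in S_q\cap V^\Gamma$ whose flow orbit remains in $V^\Gamma$ for all time either belongs to $\Omega$, or is asymptotic (in positive or negative time) to $\{\vv\}$, $\{\ww\}$, or to $[\vv\rightarrow\ww]$, hence lies in $(W^s(\vv)\cup W^s(\ww)\cup W^u(\vv)\cup W^u(\ww))\cap S_q$. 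The latter set is a finite union of smooth curves and isolated points in the two-dimensional section $S_q$, and so has zero Lebesgue area. Combining the two facts yields the conclusion.
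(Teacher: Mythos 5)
There is a genuine gap, and it is in the final paragraph rather than in the Jacobian estimates. Your change-of-variables argument controls the maximal bi-infinite invariant set $\Omega=\bigcap_{n\in\ZZ}\mathcal{F}^n(D)$, but the theorem concerns the set of initial conditions whose \emph{forward} orbit never leaves $V^\Gamma$, i.e.\ $\bigcap_{n\ge 0}\mathcal{F}^{-n}(D)$: this is how the statement is used in the paper (to conclude that the cycle cannot be Lyapunov, hence not asymptotically, stable) and how it is restated in the Conclusion (``trajectories that remain in $V^\Gamma$ for all future time form \dots\ a set with zero Lebesgue measure''). For a return map with $|\det D\mathcal{F}|\le\lambda<1$ the forward-trapped set can perfectly well have full measure --- a contraction of a disc into itself traps every point --- so area contraction says nothing about it; dissipativity is precisely what makes the statement delicate, since a priori $V^\Gamma$ could contain attractors with positive-measure basins. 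Your reduction of the trapped set to $\Omega\cup\bigl(W^s(\vv)\cup W^s(\ww)\cup W^u(\vv)\cup W^u(\ww)\bigr)$ is where the argument breaks: a forward-trapped point carries no constraint on its backward orbit, so it need not lie in $\Omega$, and its $\omega$-limit set need not be $\vv$, $\ww$ or a heteroclinic connection --- it could be a periodic sink or any other attractor hidden inside $V^\Gamma$, which is exactly the possibility the theorem is meant to exclude.

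The proof the paper points to (Rodrigues \cite{Rodrigues3}; see the remarks at the end of section~\ref{hyperbolicity}) rests instead on the uniform hyperbolicity of the first return map: at every relevant point there is a well-defined contracting direction, hence a complementary uniformly expanding one, and it is the expansion transverse to the stable lamination that ejects Lebesgue-almost every point from $V^\Gamma$ in finite forward time. Even granting hyperbolicity one is not done: the paper explicitly warns that in the $C^1$ category a hyperbolic invariant set may have positive measure (Bowen's fat horseshoe), so the quoted proof must also control distortion along the expanding direction using the explicit form of the transition maps (\ref{local_v})--(\ref{local_w}). Neither ingredient is captured by a bound on $|\det D\mathcal{F}|$. (By contrast, the uniformity of the determinant bound, which you single out as the main obstacle, is not really one: on a section close enough to the cycle, $|\det D\eta|=\delta y^{\delta-1}$ with $\delta>1$ is uniformly small, and the global transition maps only contribute bounded factors.)
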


It follows from Theorem~\ref{zero measure}  that  the shift dynamics does not trap most trajectories in the neighbourhood of the cycle. In particular, the cycle cannot be Lyapunov stable (and therefore cannot be asymptotically stable).

One astonishing property of  the heteroclinic network $\Sigma^*$  is the possibility of shadowing  $\Sigma^*$ by the property called \emph{ switching}: any infinite sequence of pseudo-orbits defined by admissible heteroclinic connections  can be shadowed, as we proceed to define.

A \emph{path} on $\Sigma^* $ is an  infinite
sequence $s^k=(c_{j})_{j\in \NN}$ of heteroclinic connections 
$c_{j}=[A_{j}\rightarrow B_{j}]$ in $\Sigma^* $ such that 
 $A_{j}, B_{j} \in \{\vv, \ww\}$ and $B_{j}=A_{j+1}$. 
 Let  $U_\vv, U_\ww\subset V^0$  be  neighbourhoods of  $\vv$ and $\ww$. 
For each heteroclinic connection in $\Sigma^*$, consider a point $p$ on it and a small neighbourhood $U_p\subset V^0$ of $p$. We assume that all these neighbourhoods  are pairwise disjoint.

The trajectory $\varphi(t,q)$ of $\dot x=f_\lambda(x)$,  \emph{follows} the  path
$s^k=(c_{j})_{j\in \NN}$ within a given set of neighbourhoods as above, if
there exist two monotonically increasing sequences of times 
$(t_{i})_{i\in \NN}$ and $(z_{i})_{i\in \NN}$
such that for all $i \in \NN$, we have $t_{i}<z_{i}<t_{i+1}$ and:
\begin{itemize}
\item
$\varphi(t,q)\subset V^0$ for all $t\ge 0$;
\item
$\varphi(t_{i},q) \in U_{A_{i}}$ and $\varphi (z_{i},q)\in U_{p}$, $p\in c_i$ and
\item
 for all $t\in (z_{i},z_{i+1})$, $\varphi (t,q)$ does not visit the neighbourhood  of any other node except $A_{i+1}$.
\end{itemize}
There is \emph{switching} near $\Sigma^*$ if     for each  path  there is a trajectory that follows it within  every set of neighbourhoods as above.  

\begin{theorem}[Aguiar {\sl et al} \cite{ACL NONLINEARITY}, 2005]
\label{teorema switching}
 There is  switching on the network $\Sigma^*$.
\end{theorem}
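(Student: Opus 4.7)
The plan is to realize each prescribed path as the itinerary of a nested sequence of topological disks obtained from the interplay between the spiraling local maps near the saddle-foci and the transversality at the two connections $[\ww\to\vv]$ in each Bykov cycle.

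First, I would fix a path $s^k=(c_j)_{j\in\NN}$ and select cross-sections: incoming and outgoing rectangles $\Sigma^{in}_\vv,\Sigma^{out}_\vv$ inside $U_\vv$ transverse to $W^s_{loc}(\vv)$ and $W^u_{loc}(\vv)$ respectively, and analogously $\Sigma^{in}_\ww,\Sigma^{out}_\ww$ inside $U_\ww$; plus small disks $D_j$ through the chosen reference points $p\in c_j$. Using the linearization at $\vv$ (and symmetrically at $\ww$), whose complex eigenvalues force spiralling, the local transition map from $\Sigma^{in}_\vv$ to $\Sigma^{out}_\vv$ sends a horizontal strip (parametrised by angle and radius in a suitable chart) onto a helicoidal ``scroll'' that accumulates on $W^u_{loc}(\vv)\cap\Sigma^{out}_\vv$ as the radius tends to zero. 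This is the geometric description already developed in section~\ref{localdyn} of the paper for a curve crossing the stable manifold; extending it from a curve to a two-dimensional strip is routine by continuous dependence.

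Next, I would assemble the global picture. The connection $[\vv\to\ww]$ is robust, so the global transition from $\Sigma^{out}_\vv$ to $\Sigma^{in}_\ww$ is a diffeomorphism sending the scroll to a topological scroll around $W^s_{loc}(\ww)\cap\Sigma^{in}_\ww$. After applying the local spiralling map at $\ww$ one obtains on $\Sigma^{out}_\ww$ a double-scroll. By property (P\ref{P6}) together with the transversality of $W^u(\ww)\pitchfork W^s(\vv)$ at each of the two connections $[\ww\to\vv]_{1,2}$ guaranteed in the passage to $\Sigma^*$, the image of this double-scroll under the global map to $\Sigma^{in}_\vv$ crosses \emph{both} local branches of $W^s(\vv)$ transversely. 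This is the crucial step: near each of the two possible choices of next connection $c_{j+1}$, the image contains a full sub-strip aligned the right way to be treated as a horizontal strip of $\Sigma^{in}_\vv$ on which the analysis repeats. Thus one obtains, for each admissible symbol $c_{j+1}$, a closed topological disk $R_{j+1}\subset\Sigma^{in}_\vv$ (or $\Sigma^{in}_\ww$) whose forward iterate under the constructed return map is contained in $U_{A_{j+1}}$ and visits $U_p$ for $p\in c_{j+1}$ before entering the next node's neighbourhood.

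Finally, iterating this construction along the prescribed path yields a nested sequence $R_0\supset R_1\supset R_2\supset\cdots$ of non-empty compact topological disks in $\Sigma^{in}_{A_0}$; by compactness their intersection is non-empty, and any point $q$ in it produces, by construction, a trajectory $\varphi(t,q)$ that remains in $V^0$ for $t\ge0$ and for which increasing sequences $(t_i),(z_i)$ can be read off directly from the successive hits of $U_{A_i}$ and $U_{p_i}$, with no intermediate visits to other nodes. The main technical obstacle is the transversality step in the previous paragraph: one must verify that after the double spiralling the image strip is \emph{uniformly} transverse to $W^s_{loc}$-related foliations so that the pre-images of the two branches each contain a genuine sub-strip rather than collapsing onto the tangencies that will be exploited later in section~\ref{sec tangency}. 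This is where (P\ref{P6}) (linking) and the $C_\vv,C_\ww>E_\vv,E_\ww$ hypothesis enter: the first forces the scroll to wind around the local invariant manifolds, while the eigenvalue condition controls the contraction rate of successive scroll turns so that each of them, once transported globally, still crosses both branches of $W^s(\vv)$.
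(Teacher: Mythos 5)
Your proposal is correct and follows essentially the same route as the paper: the crossing of the spiral/helix images over \emph{both} transverse connection points (the content of Proposition~\ref{propLines}\ref{intervals}) and of both branches of $W^s_{loc}(\vv)$, followed by a nested sequence of compact sub-rectangles realizing the prescribed itinerary, is exactly the mechanism the authors sketch at the end of section~\ref{hyperbolicity} (the full details being deferred to \cite{ACL NONLINEARITY}). The only cosmetic slip is attributing the double crossing to the eigenvalue inequality; it really comes from the helix accumulating on $Out(\ww)\cap W^u(\ww)$ while $W^s_{loc}(\vv)\cap Out(\ww)$ meets that circle transversely at the two points $P_\ww^1,P_\ww^2$.
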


The solutions that realise  switching lie for all positive time  in  the union of   tubular neighbourhoods $V^\Gamma$ of all cycles $\Gamma\subset\Sigma^*$.
Hence we may adapt the proof of  Theorem~\ref{zero measure}  to obtain:

\medbreak
\begin{corollary}
The  switching of Theorem \ref{teorema switching} is realised by a set of initial conditions with zero Lebesgue measure. 
\end{corollary}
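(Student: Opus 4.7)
The hint in the sentence preceding the corollary says it all: adapt the proof of Theorem~\ref{zero measure}. The key observation is that any trajectory realising switching on $\Sigma^*$ must, for all positive time, remain inside the \emph{finite} union $V^{\Sigma^*}:=\bigcup_{i=1}^{4} V^{\Gamma_i}$ of tubular neighbourhoods of the four Bykov cycles $\Gamma_i\subset\Sigma^*$ given by Theorem~\ref{teorema T-point switching}\eqref{item0}. This is built into the definition of following a path, since every heteroclinic connection of $\Sigma^*$ is contained in some $V^{\Gamma_i}$, and the three bullets defining path-following require the trajectory to remain in $V^0$ and to pass through small neighbourhoods of chosen points on those connections.

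The plan is then to work with a first return map on a finite family of cross-sections. For each Bykov cycle $\Gamma_i$, fix a cross-section $S_{q_i}$ transverse to $f_\lambda$ at a point $q_i$ in a connection $[\ww\rightarrow\vv]$ of $\Gamma_i$, and set $S:=\bigsqcup_{i=1}^{4}S_{q_i}$. The first return map $\mathcal{R}:\mathrm{dom}(\mathcal{R})\subset S\to S$ induced by the flow restricted to $V^{\Sigma^*}$ has only finitely many branches, one for each admissible consecutive pair of heteroclinic connections in $\Sigma^*$. Each individual branch inherits precisely the uniform expansion/contraction estimates of \cite{Rodrigues3}, since these depend only on the local linearisation at the saddle-foci $\vv$ and $\ww$ together with a finite collection of bounded transverse transition maps along the connections. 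Consequently the argument of Theorem~\ref{zero measure} carries over: the maximal forward-invariant set $\bigcap_{n\ge 0}\mathcal{R}^{-n}(S)$ is a nested intersection of strips whose widths and heights shrink uniformly, and hence has zero two-dimensional Lebesgue measure in $S$. Every switching trajectory hits $S$ infinitely often, so its trace in $S$ must sit inside this maximal invariant set; a standard Fubini argument in flow-box coordinates then propagates the zero-measure conclusion to the three-dimensional set of initial conditions in $V^{\Sigma^*}$.

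The hard part I anticipate is the verification that the expansion and contraction constants remain uniform across different branches of $\mathcal{R}$, i.e.\ that the analysis of \cite{Rodrigues3} for a single Bykov cycle really glues along the whole of $\Sigma^*$. This is where the $\zg{1}$-equivariance and the fact that all four cycles share the same two saddle-foci $\vv$ and $\ww$ enter in an essential way: the eigenvalue data of (P\ref{P2}) is identical on every branch, while the transverse transition maps between cross-sections contribute only finitely many bounded multiplicative constants. Once this uniformity is recorded, the Cantor-set-of-zero-measure description of the maximal invariant set of $\mathcal{R}$ carries over with no further essential difficulty, and the corollary follows.
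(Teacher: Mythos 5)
Your argument is correct and follows essentially the same route as the paper, which simply observes that switching trajectories remain for all positive time in the finite union of tubular neighbourhoods of the cycles in $\Sigma^*$ and then adapts the proof of Theorem~\ref{zero measure}. The extra details you supply (finitely many branches of the return map on a finite union of cross-sections, uniformity of the hyperbolic estimates coming from the shared eigenvalue data at $\vv$ and $\ww$, and the Fubini propagation to initial conditions) are exactly the adaptation the paper leaves implicit.
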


\subsection{Non-hyperbolic dynamics}

When the symmetry $\gamma_2$ is broken, the dynamics changes dramatically, as can be seen in the next result:

\begin{theorem}
\label{teorema tangency}
If a vector field $f_0$ satisfies (P\ref{P1})--(P\ref{P6}), then in any open neighbourhood of $f_0$ in the space of $\zg{1}$-equivariant vector fields of class $C^1$ on $\EU^3$, there are vector fields $f_*$
whose flow  has a heteroclinic tangency between $W^u(\ww)$ and $W^s(\vv)$ in $V^0$. 
\end{theorem}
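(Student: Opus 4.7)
The plan is to embed $f_0$ in a generic one-parameter $\zg{1}$-equivariant family $\{f_\lambda\}$ of $C^1$ vector fields contained in the prescribed neighbourhood of $f_0$, with $f_\lambda$ breaking the $\zg{2}$ symmetry for $\lambda>0$ and exhibiting the transverse Bykov network $\Sigma^\star$ furnished by Theorem~\ref{teorema T-point switching}, and then to detect a heteroclinic tangency as a codimension-one bifurcation occurring at some value $\lambda_*$ along this family.

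First, I would fix a local cross-section $S_q\subset V^0$ to the transverse heteroclinic connection from $\ww$ to $\vv$ at a point $q$ on the original Bykov cycle, and study the traces $\Gamma^u_\lambda=W^u(\ww)\cap S_q$ and $\Gamma^s_\lambda=W^s(\vv)\cap S_q$. By property (P\ref{P4}) the two curves coincide for $\lambda=0$ (both lying on the trace of $Fix(\zg{2})$ in $S_q$), whereas for $\lambda>0$ small they are distinct smooth one-dimensional curves meeting transversely at $q$.

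Next, I would use the standard local analysis near the saddle-foci $\vv$ and $\ww$ --- whose eigenvalues include nonzero imaginary parts $\alpha_\vv$ and $\alpha_\ww$ --- together with the linking property (P\ref{P6}), to describe the \emph{global} trace $\Gamma^u_\lambda$: iteration of the first-return map acting on $W^u_{loc}(\ww)$ produces a sequence of spiral arcs $A^k_\lambda\subset \Gamma^u_\lambda$ which accumulate, in the Hausdorff sense, on a local piece of $\Gamma^s_\lambda$. The $n$-pulse heteroclinic connections given by assertion~\eqref{item4} of Theorem~\ref{teorema T-point switching} correspond exactly to transverse crossings of $A^k_\lambda$ with $\Gamma^s_\lambda$. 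Between two consecutive transverse crossings the arc must reach a local extremum of its signed normal displacement to $\Gamma^s_\lambda$; I would denote this displacement by $d_k(\lambda)$. It is continuous in $\lambda$, and its vanishing is tantamount to a tangency of $W^u(\ww)$ with $W^s(\vv)$ in $V^0$.

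Finally, I would extract a tangency from a sign change of $d_k$. Because $\Gamma^u_0=\Gamma^s_0$, the spirals $A^k_\lambda$ collapse onto $\Gamma^s_\lambda$ as $\lambda\to 0^+$, which constrains the asymptotic behaviour of $d_k(\lambda)$; on the other hand a Melnikov-type computation within the $\zg{1}$-equivariant perturbation class should exhibit, for a suitable index $k$ and a value $\lambda_1$ inside the prescribed neighbourhood, a displacement $d_k(\lambda_1)$ of definite sign. Comparing two consecutive spiral arcs $A^k_\lambda$ and $A^{k+1}_\lambda$, whose extrema sit on opposite sides of $\Gamma^s_\lambda$ due to the rotation by $\alpha_\vv$ between successive turns, produces a parameter subinterval where $d_k$ changes sign; the Intermediate Value Theorem then delivers $\lambda_*$ with $d_k(\lambda_*)=0$ and thus $f_*:=f_{\lambda_*}$ with the desired tangency. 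The main obstacle is precisely this last step: producing a controlled sign change. It requires a uniform normal-form analysis of the first-return map near the cycle as $\lambda\to 0$, so as to keep the spiral geometry of the arcs $A^k_\lambda$ under control when infinitely many of them become relevant, together with a Melnikov-type estimate ruling out topological cancellations --- a cancellation that would in fact be forbidden by the linking condition (P\ref{P6}), so that this property enters the argument in an essential way.
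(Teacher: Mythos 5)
Your overall strategy --- embed $f_0$ in a generic one-parameter $\zg{1}$-equivariant family, measure a signed displacement of $W^u(\ww)$ relative to $W^s(\vv)$ on a cross-section, and extract a tangency parameter by the Intermediate Value Theorem --- is the same as the paper's. But the decisive step is missing, exactly where you flag it: you never actually produce the sign change, and the mechanism you propose for it is not the one that works. Your claim that the extrema of two consecutive spiral arcs $A^k_\lambda$ and $A^{k+1}_\lambda$ ``sit on opposite sides of $\Gamma^s_\lambda$ due to the rotation by $\alpha_\vv$'' is unsubstantiated and, as stated, doubtful: by Proposition~\ref{Structures} the helices produced by the passage near the saddle-foci accumulate on the circle $Out(\ww)\cap W^u_{loc}(\ww)$ from one side, so all their turns lie on the same side of that circle, and whether a given extremum lies inside or outside the closed curve $W^s_{loc}(\vv)\cap Out(\ww)$ is decided by a comparison of heights, not by a parity of rotation. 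No Melnikov-type estimate is needed (nor would one obviously deliver a definite sign for a \emph{fixed} $k$ over a whole parameter subinterval); in this setting Melnikov computations are only invoked to obtain the transversality of the two-dimensional manifolds, not the tangency.

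The paper closes the gap by following a single distinguished point rather than a family of arcs. Writing $W^u_{loc}(\ww)\cap In(\vv)$ as a closed curve whose highest point is $(x_*,\lambda)$, its image under the first-hit map $\eta=\Phi_\ww\circ\Psi_{\vv\rightarrow\ww}\circ\Phi_\vv$ is a double helix on $Out(\ww)$ with a fold at
$\eta(x_*,\lambda)=\left(x_*-K\ln\lambda,\ \lambda^{\delta}\right)$, where $K>0$ and $\delta=\delta_\vv\delta_\ww>1$ by (P\ref{P2}). Two elementary asymptotic facts then force the alternation you were hoping to get from a Melnikov argument: the angular coordinate $x_*-K\ln\lambda$ tends to infinity, so the fold point winds around the cylinder $Out(\ww)$ infinitely many times as $\lambda\to 0^+$; and its height $\lambda^{\delta}$ decays strictly faster than the height (of order $\lambda$) of the closed curve $W^s_{loc}(\vv)\cap Out(\ww)$. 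Consequently one finds $\lambda_1>\lambda_2>0$, arbitrarily small, such that at $\lambda_1$ the fold point lies in the region between $W^s_{loc}(\vv)$ and $W^u_{loc}(\ww)$ that $\Psi_{\ww\rightarrow\vv}$ sends to $\{y<0\}\subset In(\vv)$, while at $\lambda_2$ it lies in the complementary region sent to $\{y>0\}$; continuity yields a $\lambda_*\in(\lambda_2,\lambda_1)$ at which the returned curve is tangent to $W^s_{loc}(\vv)=\{y=0\}$. Note that this uses only \emph{one} application of $\eta$ (no iteration of the return map, no uniform control of infinitely many arcs), and that the ``uniform normal-form analysis as $\lambda\to 0$'' you call for reduces to the single inequality $\delta>1$ coming from $C_\vv>E_\vv$ and $C_\ww>E_\ww$; the linking condition (P\ref{P6}) enters only through Proposition~\ref{Structures}, which guarantees the spiral-to-helix geometry. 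Without this quantitative input your argument remains a plausible plan rather than a proof.
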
 

Theorem~\ref{teorema tangency} is proved in section \ref{sec tangency}.
The vector fields  $f_*$ will be obtained in a generic one-parameter unfolding $f_\lambda$ of $f_0$, for which we will find a sequence of $\lambda_i$ converging to zero such that the flow of $f_{\lambda_i}$ has the required property.
When $\lambda_i \rightarrow 0$, these tangencies accumulate on the transverse connections.
Persistent tangencies in a dissipative diffeomorphism are related to the coexistence of infinitely many sinks and sources \cite{Newhouse74,Newhouse79}. Moreover, any parametrised family of diffeomorphisms going through a heteroclinic tangency associated to a dissipative cycle must contain a sequence of H\'enon-like families  \cite{Colli}.  Hence, for $\lambda \approx 0$, return maps to appropriate domains close to the tangency are conjugate to H\'enon-like maps and thus: 

\begin{corollary}
\label{attractors}
Suppose a vector field $f_0$ satisfies (P\ref{P1})--(P\ref{P6}).
Then in the space of $\zg{1}$-equivariant vector fields of class $C^3$ on $\EU^3$,
there is a set $\mathcal{C}$ of vector fields accumulating on $f_0$ such that  all $f_\lambda \in \mathcal{C}$ possess infinitely many strange (coexisting) attractors in $V^0$ .
\end{corollary}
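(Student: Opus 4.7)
The plan is to derive Corollary~\ref{attractors} from Theorem~\ref{teorema tangency} by invoking the result of Colli~\cite{Colli} on the coexistence of infinitely many H\'enon-like strange attractors in generic parametrised families unfolding a dissipative heteroclinic tangency.

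First I would fix the generic one-parameter $\zg{1}$--equivariant unfolding $f_\lambda$ provided by Theorem~\ref{teorema tangency}, together with a sequence $\lambda_i\to 0$ at which $W^u(\ww)$ and $W^s(\vv)$ meet with a quadratic heteroclinic tangency in $V^0$. I would then verify that each such tangency is of \emph{dissipative} type and is associated to a non-trivial hyperbolic basic set. The eigenvalue data in (P\ref{P2}), $C_\vv>E_\vv>0$ and $C_\ww>E_\ww>0$, ensure that the linear parts of the flow at $\vv$ and $\ww$ have negative trace in the plane transverse to the one-dimensional connection, so the local Poincar\'e maps near each equilibrium are area-contracting and hence so is the global first-return map to any cross-section transverse to the Bykov cycle. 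The required hyperbolic basic set supporting the tangency can be taken to be any of the uniformly hyperbolic horseshoes furnished by assertion~\eqref{item5} of Theorem~\ref{teorema T-point switching}, whose stable and unstable Cantor laminations lie along the continuations of $W^u(\ww)$ and $W^s(\vv)$.

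Second, I would apply Colli's theorem to the one-parameter family of first-return maps associated to $f_\lambda$ on a cross-section to the tangent heteroclinic orbit at $\lambda_i$. The theorem provides, in every neighbourhood of $\lambda_i$, a set of parameter values at which the corresponding diffeomorphism carries infinitely many coexisting H\'enon-like strange attractors, each of them supported in a box of a sequence of pairwise disjoint boxes accumulating on the tangency point. Suspending these attractors by the flow of $f_\lambda$ produces infinitely many pairwise disjoint strange attractors inside $V^0$. Taking the union of the resulting parameter sets over $i\in\NN$ then yields a set $\mathcal{C}$ of $\zg{1}$--equivariant $C^3$ vector fields accumulating on $f_0$, as required.

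The hard point is simply checking that the hypotheses of Colli's theorem survive the $\zg{1}$--equivariant constraint. This should not be a serious obstacle: the tangency produced by Theorem~\ref{teorema tangency} occurs at a point that lies off the flow-invariant circle $Fix(\zg{1})$, so a neighbourhood of the tangency sits inside a region where $\gamma_1$ acts freely and the equivariant unfolding looks, locally, like an unconstrained one. Consequently the parameter-exclusion arguments and the scale-by-scale renormalisation in Colli's proof apply verbatim, and the $C^3$ regularity demanded by the corollary is exactly the smoothness hypothesis of Colli's result.
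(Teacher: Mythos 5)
Your overall strategy (produce a dissipative tangency, then invoke Colli) matches the paper's, but the key intermediate step is where you diverge from the paper and where your argument has a gap. Colli's theorem applies to families of surface diffeomorphisms unfolding a homoclinic/heteroclinic tangency associated to \emph{dissipative saddle periodic points}. The tangency supplied by Theorem~\ref{teorema tangency} is between $W^u(\ww)$ and $W^s(\vv)$, where $\vv$ and $\ww$ are \emph{equilibria of the flow}: on a cross-section, the traces of these manifolds are not the stable/unstable manifolds of any periodic point of the first-return map (indeed the return map is undefined on $W^s_{loc}(\vv)$). You assert that the tangency ``is associated to'' the horseshoes of Theorem~\ref{teorema T-point switching}\eqref{item5} because their Cantor laminations ``lie along the continuations of $W^u(\ww)$ and $W^s(\vv)$'', but those laminations only \emph{accumulate} on $W^u(\ww)$ and $W^s(\vv)$ (by the inclination lemma); at the precise parameter $\lambda_i$ of Theorem~\ref{teorema tangency} there need not be any tangency between leaves of the two laminations. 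One must still argue that a further small parameter adjustment (or perturbation) converts the tangency of the equilibria's manifolds into a genuine tangency between invariant manifolds of periodic orbits. The paper bridges exactly this gap by citing Ovsyannikov and Shilnikov \cite{OS}, which produces small perturbations with a periodic solution, as close as desired to the cycle, whose stable and unstable manifolds are tangent; Colli is then applied to that periodic-orbit tangency. Your lamination route can be made to work, but the accumulation-plus-reselection-of-parameters step is the whole content and is missing.

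Two smaller inaccuracies. First, your dissipativity argument is off: at $\ww$ the trace of the linearisation on the focus plane is $2E_\ww>0$ and the full divergence $2E_\ww-C_\ww$ need not be negative under (P\ref{P2}); what makes the return map area-contracting is the product of saddle values $\delta_\vv\delta_\ww=(C_\vv C_\ww)/(E_\vv E_\ww)>1$ along the whole cycle, not local area-contraction at each equilibrium. Second, Colli's theorem is stated for $C^\infty$ perturbations, so the $C^3$ regularity in the corollary is not ``exactly the smoothness hypothesis of Colli's result''; the paper gets $C^3$ by appealing to \cite{BonattiEcia}. Your closing remark about the $\zg{1}$--equivariance being harmless near the tangency is correct and is implicitly used by the paper as well.
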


\begin{proof}
In Ovsyannikov and  Shilnikov \cite{OS}, it is shown that there are small perturbations of $f_0$ with a periodic solution as close as desired to the cycle whose stable and unstable manifolds are tangent. By Colli \cite{Colli}, the result follows.
Note that although the statement in \cite{Colli} asks for $C^\infty$ perturbations, in \cite{BonattiEcia} the coexistence of strange attractors only requires $C^3$. 
\end{proof}

 \section{Local dynamics near the network}
  \label{localdyn}
 In order to obtain results on $\zg{1}$--equivariant  vector fields $C^1$ close to  $f_0$ we study  the bifurcation  of a generic one-parameter family of differential equations $\dot{x}=f_\lambda(x)$ on the unit sphere $\EU^3 =\{X=(x_1,x_2,x_3,x_4) \in \RR^4: ||X||=1\}$.
The unfolding $f_\lambda: \EU^3 \rightarrow \mathbf{T}\EU^3$ of $f_0$  is a family of $C^1$ vector fields  with the following properties:
\begin{enumerate}
\renewcommand{\labelenumi}{(P{\theenumi})}
\setcounter{enumi}{\value{lixo}}
\item\label{PartialSymmetry}
For each $\lambda$ the vector field  $f_\lambda$ is $\zg{1}$--equivariant.
\medbreak
\item\label{P4.5}
There are two equilibria $\vv$ and $\ww$ satisfying (P\ref{P2}) and (P\ref{P3}).
\medbreak
\item\label{P5}  
For $\lambda \neq 0$, the local two-dimensional manifolds $W^u(\ww)$ and $W^s(\vv)$ intersect transversely at two  trajectories.
Together with the connections in (P\ref{P3}) this forms a Bykov  heteroclinic network that we denote by $\Sigma^\lambda$.
\setcounter{lixo}{\value{enumi}}
\end{enumerate}

In order to describe the dynamics around the Bykov cycles, we start by introducing  local coordinates near the saddle-foci $\vv$ and $\ww$ and we define some terminology that will be used in the rest of the paper.
Since by assumption (P\ref{P2}) we have $C_\vv \neq E_\vv$ and $C_\ww \neq E_\ww$, then by Samovol's Theorem \cite{Samovol}, the vector field $f_\lambda$ is $C^1$--conjugate to its linear part around each saddle-focus  --- see also Homburg and Sandstede \cite{HS} (section 3.1).  
In cylindrical coordinates $(\rho ,\theta ,z)$ the linearization at $\vv$ takes the form
$$
 \dot{\rho}=-C_{\vv }\rho \qquad  \dot{\theta}=1  \qquad  \dot{z}=E_{\vv }z
$$
and around $\ww$ it is given by:
$$
\dot{\rho}=E_{\ww }\rho \qquad \dot{\theta}= 1  \qquad \dot{z}=-C_{\ww }z .
$$
 \begin{figure}
\begin{center}
\includegraphics[height=6cm]{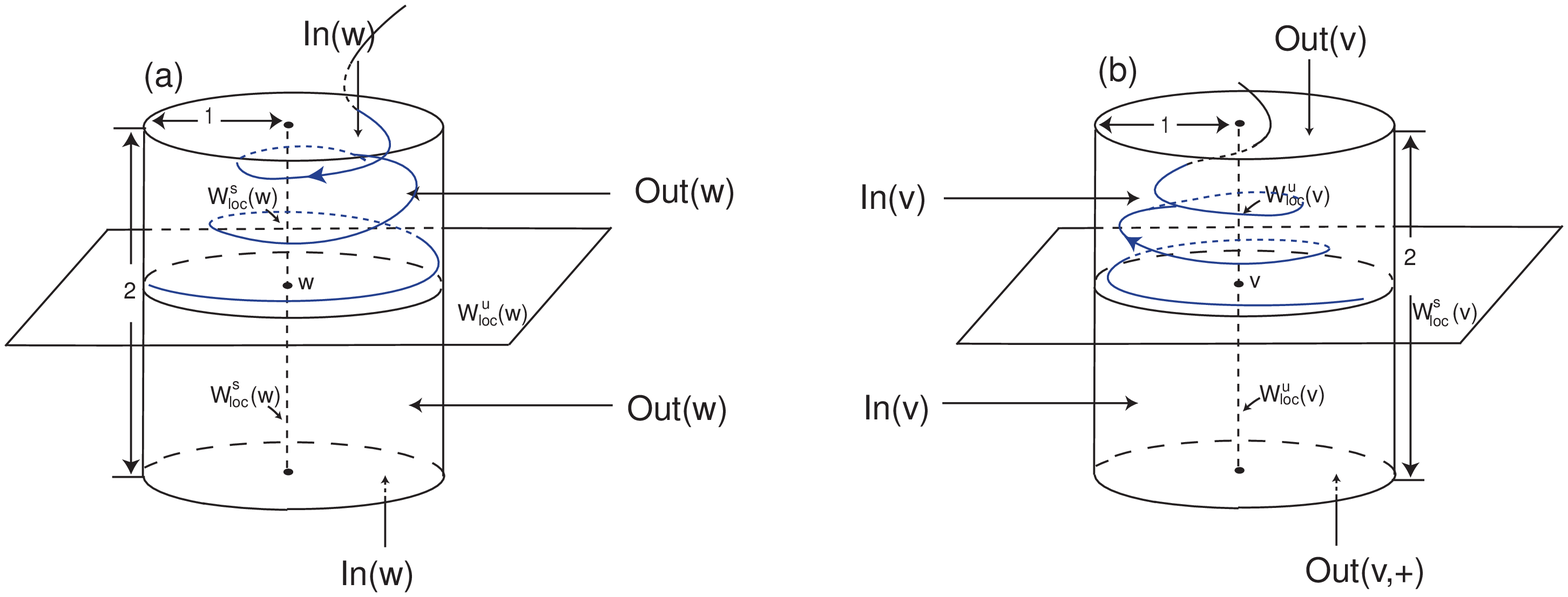}
\end{center}
\caption{\small  Cylindrical neighbourhoods of the saddle-foci $\ww$ (a) and $\vv$ (b). }
\label{neigh_vw}
\end{figure}

We  consider cylindrical neighbourhoods  $V$ and $W$  in ${\EU}^3$ of $\vv $ and $\ww $, respectively, of radius $\varepsilon>0$ and height $2\varepsilon$ --- see figure \ref{neigh_vw}.
After a linear rescaling of the local variables, we  may take $\varepsilon=1$.
Their boundaries consist of three components: the cylinder wall parametrized by $x\in \RR\pmod{2\pi}$ and $|y|\leq 1$ with the usual cover $ (x,y)\mapsto (1 ,x,y)=(\rho ,\theta ,z)$ and two discs (top and bottom of the cylinder). We take polar coverings of these discs $(r,\varphi )\mapsto (r,\varphi , \pm 1)=(\rho ,\theta ,z)$ whith $0\leq r\leq 1$ and $\varphi \in \RR\pmod{2\pi}$ and use the following terminology, as in  figure \ref{neigh_vw}:
\begin{itemize}
\item
$In(\vv)$, the cylinder wall of $V$,  consists of points that go inside $V$ in positive time;
\item
$Out(\vv)$, the top and bottom of $V$,  consists of points that go inside $V$ in negative time;
\item
$In(\ww)$, the top and bottom of $W$,  consists of points that go inside $W$ in positive time;
\item
$Out(\ww)$,  the cylinder wall  of $W$,  consists of points that go inside $W$ in negative time.
\end{itemize}
The flow is transverse to these sets and moreover the boundaries of $V$ and of $W$ may be written as the closures of the disjoint unions $In(\vv) \cup Out (\vv)$ and  $In(\ww) \cup Out (\ww)$, respectively.
The trajectories of  all points $(x,y)$ in $In(\vv) \backslash W^s(\vv)$, leave $V$ at $Out(\vv)$ at
\begin{equation}
\Phi_{\vv }(x,y)=\left(|y|^{\delta_\vv},-\frac{\ln |y|}{E_\vv}+x\right)=(r,\phi)
\qquad \mbox{where}\qquad 
\delta_\vv=\frac{C_{\vv }}{E_{\vv}} > 1 \  .
\label{local_v}
\end{equation}
Similarly, points $(r,\phi)$ in $In(\ww) \backslash W^s(\ww)$, leave $W$ at $Out(\ww)$ at
\begin{equation}
\Phi_{\ww }(r,\varphi )=\left(-\frac{\ln r}{E_\ww}+\varphi,r^{\delta_\ww}\right)=(x,y)
\qquad \mbox{where}\qquad 
\delta_\ww=\frac{C_{\ww }}{E_{\ww}} >1 \ .
\label{local_w}
\end{equation}

We will denote by $W^u_{loc}(\vv)$ the portion of $W^u(\vv)$  that goes from $\vv$ up to $In(\ww)$ not intersecting the interior of $W$.
Similarly,  $W^s_{loc}(\vv)$ is the portion of $W^s(\vv)$ outside $W$ that goes directly  from $Out(\ww)$ into $\vv$, and  $W^u_{loc}(\ww)$ and $W^s_{loc}(\ww)$ connect $\ww$ to $\partial V$, not intersecting the interior of $V$.

The flow sends points in $Out(\vv)$ near $W^u_{loc}(\vv)$ into $In(\ww)$ along the connection 
$[\vv\rightarrow\ww]$. We will assume that this map $\Psi_{\vv \rightarrow \ww}$ is the identity, this is compatible with hypothesis (P\ref{P6}); nevertheless all the results follow  if $\Psi_{\vv \rightarrow \ww}$ is either a uniform contraction or a uniform expansion.
We  make the convention that one of the  connections $[\vv\rightarrow\ww]$
links points with $y>0$ in $V$ to points with $y>0$ in $W$.
There is also a well defined transition map 
$ \Psi_{\ww \rightarrow \vv}:Out(\ww)\longrightarrow In(\vv)$ that will be discussed later.

 By (P\ref{P5}), the manifolds $W^u(\ww)$ and $W^s(\vv)$ intersect transversely for $\lambda \neq 0$. For $\lambda$ close to zero, we are
assuming that $W^s(\vv)$ intersects the wall $Out(\ww)$ of the cylinder $W$ on a closed curve  represented
in figure~\ref{elipse} by an ellipse --- this is the expected
unfolding from the coincidence of the invariant manifolds of the equilibria.

\begin{figure}
\begin{center}
\includegraphics[height=12cm]{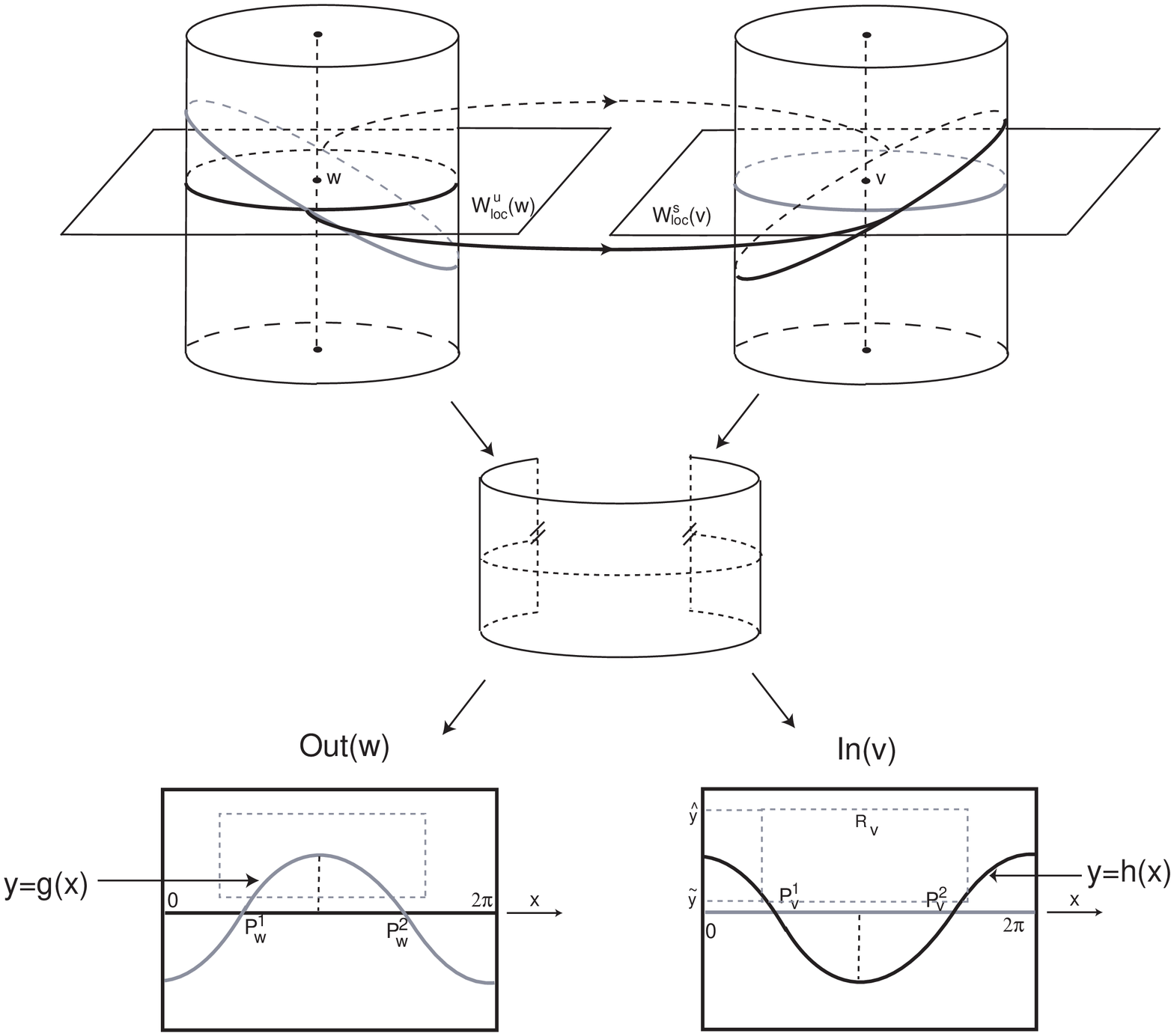}
\end{center}
\caption{\small For $\lambda$ close to zero, we are assuming that $W^s(\vv)$ intersects the wall $Out(\ww)$ of the cylinder $W$ on a closed curve -- this is the expected unfolding from the coincidence of the invariant manifolds of the equilibria.}
\label{elipse}
\end{figure}

\bigbreak

From the geometrical behaviour  of the local transition  maps (\ref{local_v}) and (\ref{local_w}), we need some definitions: a \emph{ segment }$\beta $ 
on $In(\vv)$ is a smooth regular parametrized curve 
$\beta :[0,1)\rightarrow In(\vv)$ that meets $W^{s}_{loc}(\vv )$ transversely at the point $\beta (1)$ only and such that, writing $\beta (s)=(x(s),y(s))$, both $x$ and $y$ are monotonic functions of $s$ -- see figure \ref{Structures} (a).

A \emph{ spiral } on a disc $D$  around a point $p\in D$ is a curve 
$\alpha :[0,1)\rightarrow D$
satisfying $\dpt \lim_{s\to 1^-}\alpha (s)=p$ and such that if
$\alpha (s)=(r(s),\theta(s))$ is its expressions in
polar coordinates around $p$ then the maps $r$ and $\theta$ are monotonic, and 
$\lim_{s\to 1^-}|\theta(s)|=+\infty$.

Consider a cylinder $C$ parametrized by a covering $(\theta,h )\in  \RR\times[a,b]$,
with $a<b\in\RR$ where $\theta $ is periodic.
A \emph{helix} on the cylinder $C$ 
\emph{accumulating on the circle} 
$h=h_{0}$ is a curve
$\gamma :[0,1)\rightarrow C$
such that its coordinates $(\theta (s),h(s))$ 
satisfy $ \lim_{s\to 1^-}h(s)=h_{0}$, $\lim_{s\to 1^-}|\theta (s)|=+\infty$ and the maps $\theta$ and $h$ are monotonic.

Using these definitions and the expressions \eqref{local_v} and \eqref{local_w} for $\Phi_{\vv }$ and $\Phi_{\ww }$  we get:

\begin{proposition}[Aguiar \emph{et al} \cite{ALR}, 2010]
\label{Structures}
When (P\ref{P6}) holds, then
a segment on $In(\vv)$ is mapped by $\Phi _{\vv}$ into 
 a spiral on $Out(\vv)$ around $W^u_{loc}(\vv)\cap Out(\vv) $.
 This spiral is mapped by $ \Psi_{\vv \rightarrow \ww}$  into another spiral around $W^s_{loc}(\ww)\cap In(\vv)$,
 which is mapped
by $\Phi _{\ww}$ into a helix on $Out(\ww)$ accumulating on the circle  $Out(\ww) \cap W^{u}(\ww)$.
\end{proposition}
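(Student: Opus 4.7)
The plan is to track the image of a segment $\beta:[0,1)\to In(\vv)$ step by step through the three maps $\Phi_\vv$, $\Psi_{\vv\to\ww}$ and $\Phi_\ww$, checking at each stage that the image satisfies the three curve definitions (segment, spiral, helix) introduced just before the statement. The computations are all explicit because the local maps have the closed forms \eqref{local_v} and \eqref{local_w}, so the work consists essentially in verifying that the relevant coordinate functions are monotonic and have the correct limits.

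\textbf{Step 1: segment $\mapsto$ spiral on $Out(\vv)$.} Write $\beta(s)=(x(s),y(s))$ with both coordinates monotonic and $y(s)\to 0$ as $s\to 1^-$. Substituting into \eqref{local_v}, the image has polar coordinates on $Out(\vv)$
\[
r(s)=|y(s)|^{\delta_\vv}, \qquad \phi(s)=-\frac{\ln|y(s)|}{E_\vv}+x(s).
\]
Since $y$ is monotonic, $r(s)$ is monotonic and tends to $0$. For $\phi$ observe that $-\ln|y(s)|/E_\vv$ is strictly monotonic and diverges to $+\infty$; since $x(s)$ is bounded and monotonic on $[0,1)$, the derivative of $\phi$ is dominated near $s=1$ by the logarithmic term, so by restricting $\beta$ to a final sub-interval (which only shrinks the spiral towards its centre) we may assume $\phi$ is monotonic throughout, with $|\phi(s)|\to\infty$. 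As $W^u_{loc}(\vv)\cap Out(\vv)$ is precisely the origin $r=0$ of the polar chart, this is a spiral around that point.

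\textbf{Step 2: spiral $\mapsto$ spiral on $In(\ww)$.} The transition $\Psi_{\vv\to\ww}:Out(\vv)\to In(\ww)$ follows the connection $[\vv\to\ww]$ and maps $W^u_{loc}(\vv)\cap Out(\vv)$ to $W^s_{loc}(\ww)\cap In(\ww)$. Under the convention (compatible with (P\ref{P6})) that $\Psi_{\vv\to\ww}$ is the identity, or more generally a uniform contraction/expansion, the monotonicity of $r$ and of $\phi$ is preserved, and the two accumulation properties $r\to 0$, $|\phi|\to\infty$ are unchanged. Hence the image is a spiral on $In(\ww)$ around $W^s_{loc}(\ww)\cap In(\ww)$.

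\textbf{Step 3: spiral $\mapsto$ helix on $Out(\ww)$.} Let $(r(s),\varphi(s))$ denote the spiral on $In(\ww)$ produced in Step 2. Applying \eqref{local_w} gives coordinates on the cylinder $Out(\ww)$
\[
x(s)=-\frac{\ln r(s)}{E_\ww}+\varphi(s), \qquad y(s)=r(s)^{\delta_\ww}.
\]
Since $r(s)\to 0$ monotonically, $y(s)\to 0$ monotonically, so the image accumulates on the circle $y=0$ which is exactly $Out(\ww)\cap W^u(\ww)$. For the angular coordinate, both $-\ln r(s)/E_\ww$ and $\varphi(s)$ diverge to infinity; if they have the same sign they are already jointly monotonic and $|x(s)|\to\infty$, while if they have opposite signs one of the two terms eventually dominates (because of the specific rate at which $\varphi$ winds, inherited from Step 1, versus the rate of the logarithmic term) and again the sum is monotonic near $s=1$. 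This yields a helix on $Out(\ww)$ accumulating on the circle $Out(\ww)\cap W^u(\ww)$, completing the proof.

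The main obstacle I expect is the bookkeeping for global monotonicity in Steps 1 and 3: the raw formulas only guarantee asymptotic monotonicity as $s\to 1^-$. In practice one either restricts $\beta$ to a neighbourhood of its endpoint or verifies that the dominant terms (the logarithms of the coordinates that vanish on the stable manifolds) control the sum throughout the relevant interval. Once this is handled, everything else is a direct substitution into \eqref{local_v} and \eqref{local_w}.
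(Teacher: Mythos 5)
Your argument is correct and is essentially the proof the paper intends: the paper does not prove Proposition~\ref{Structures} itself (it quotes it from \cite{ALR}, saying only that it follows from the expressions \eqref{local_v} and \eqref{local_w}), and your direct substitution of a segment through $\Phi_\vv$, $\Psi_{\vv\to\ww}$ and $\Phi_\ww$, checking monotonicity and the limits $r\to 0$, $|\phi|\to\infty$, $y\to 0$, is exactly that computation, including the legitimate restriction to a terminal subinterval to get genuine (rather than merely asymptotic) monotonicity of the angular coordinate. One remark: the ``opposite signs'' branch of your Step 3 never actually occurs --- hypothesis (P\ref{P6}) is precisely what fixes the common positive sign of $\dot\theta$ in the two linearizations, so the two logarithmic contributions to $x(s)$ always reinforce, producing the coefficient $K=(C_\vv+E_\ww)/(E_\vv E_\ww)>0$ of Proposition~\ref{propLines}; your fallback claim that one term would dominate in the opposite-sign case is not justified in general (the rates cancel when $C_\vv=E_\ww$, which (P\ref{P2}) does not exclude), but since that case is excluded by (P\ref{P6}) this does not affect the validity of the proof.
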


\section{Hyperbolicity}
\label{hyperbolicity}
In this section we show that the hyperbolicity of Theorem~\ref{teorema T-point switching} only holds when we restrict our attention to trajectories that remain near the  cycles in the network.
The construction also indicates how  the geometrical content of
Theorems~\ref{teorema T-point switching}, \ref{zero measure} and \ref{teorema switching} is obtained.

 Let $(P_\ww^1,0)$ and $(P_\ww^2,0)$ with $0<P_\ww^1<P_\ww^2<2\pi$ be the coordinates of the two points in 
 $W^u_{loc}(\ww)\cap W^s_{loc}(\vv)\cap Out(\ww)$
 where the connections  $[\ww \rightarrow \vv]$ meet $Out(\ww)$, as in figure~\ref{elipse}.
 Analogously, let $(P_\vv^1,0)$ and $(P_\vv^2,0)$ be the coordinates of the two corresponding points  in $W^u_{loc}(\ww)\cap W^s_{loc}(\vv)\cap In(\vv)$ where $[\ww \rightarrow \vv]$ meets $In(\vv)$, with the convention that $(P_\ww^j,0)$ and $(P_\vv^j,0)$ are on the same trajectory for $j=1,2$.
 
 For small $\lambda>0$ we may write $W^s(\vv)\cap Out(\ww)$ as the graph
  of a smooth function  $y=g(x)$, with $g(P_\ww^j)=0$, $j=1,2$. 
  Similarly, $W^u(\ww)\cap In(\vv)$ is the graph
  of a smooth function  $y=h(x)$, with $g(P_\vv^j)=0$, $j=1,2$. 
  For definiteness, we number the points in the connections to have 
   $g^\prime(P_\ww^1)>0$. Hence   $g^\prime(P_\ww^2)<0$, and 
$h^\prime(P_\vv^1)<0$,  $h^\prime(P_\vv^2)>0$. 

\begin{proposition}
\label{propLines}
For    the first hit map $\eta= \Phi _{\ww}\circ \Psi_{\vv \rightarrow \ww}\circ \Phi _{\vv}:In(\vv)\seta Out(\ww)$, we have:
\begin{enumerate}
\renewcommand{\theenumi}{\roman{enumi}}
\renewcommand{\labelenumi}{{\theenumi})}
\item\label{horizontal}
any horizontal line segment $[a,b]\times\{y_0\}\subset In(\vv)$ is mapped by $\eta$ into a horizontal 
line segment $[c,d]\times \{y_0^\delta\}\subset Out(\ww)$, with $\delta>1$; 
\item\label{vertical}
any vertical line segment $\{x_0\}\times[0,y_0]\subset In(\vv)$ is mapped by $\eta$ into a helix accumulating on the circle $Out(\ww)\cap W^u_{loc}(\ww)$; 
\item\label{intervals}
given $x_0$, there are positive constants $a<b\in\RR$ and a sequence of  intervals \\
$\mathcal{I}_n= \{x_0\} \times [e^{-2n\pi/K}e^a, e^{-2n\pi/K}e^b] $
such that $\eta\left(\mathcal{I}_n\right)$ crosses $W^s_{loc}(\vv)\cap Out(\ww)$ twice transversely;
\item\label{disjoint}
if $K>1$ then the intervals $\mathcal{I}_n$ are disjoint.
\end{enumerate}
\end{proposition}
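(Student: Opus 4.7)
The plan is to reduce every claim to an explicit formula for $\eta$ obtained by composing the local expressions \eqref{local_v} and \eqref{local_w} with the identification $\Psi_{\vv\to\ww}=\mathrm{Id}$. A direct substitution gives
\[
\eta(x,y) \;=\; \bigl(\,x - K\ln|y|\bmod 2\pi,\ |y|^\delta\,\bigr),
\]
where $\delta = \delta_\vv\delta_\ww = C_\vv C_\ww/(E_\vv E_\ww)>1$ by the saddle inequalities in (P\ref{P2}) and $K = \delta_\vv/E_\ww + 1/E_\vv > 0$. From this formula parts (\ref{horizontal}) and (\ref{vertical}) are immediate: on a horizontal line $\{y=y_0\}$ the map $\eta$ acts as a rigid $x$-translation and raises the height to $|y_0|^\delta$, while on a vertical segment $\{x_0\}\times(0,y_0]$ parametrizing by $y$ gives the curve $(x_0-K\ln y,\,y^\delta)$, whose two coordinates are monotonic in $y$, with the angular coordinate tending to $+\infty$ and the height to $0$ as $y\to 0^+$. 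The limit circle $\{y=0\}\subset Out(\ww)$ is identified with $W^u_{loc}(\ww)\cap Out(\ww)$ via the linearisation of the flow at $\ww$, matching the definition of a helix accumulating on that circle given in Section~\ref{localdyn}.

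For (\ref{intervals}) I compare the helix obtained in (\ref{vertical}) with the graph $y=g(x)$ representing $W^s(\vv)\cap Out(\ww)$. The intersection condition reads $y^\delta = g(x_0-K\ln y-2n\pi)$. Over one turn of the helix, which corresponds to $y$ varying over an interval of multiplicative length $e^{-2\pi/K}$, the angular coordinate sweeps once through each zero $P_\ww^j$ of $g$; at those moments the helix sits at strictly positive height $y^\delta$ whereas $g$ vanishes, so the helix lies above the graph near both $P_\ww^j$. Because $g'(P_\ww^1)>0$ and $g'(P_\ww^2)<0$, the function $g$ attains a positive maximum on the arc $(P_\ww^1,P_\ww^2)$; for $n$ large the height $y^\delta$ is smaller than this maximum, so the helix dips below the graph between the two zeros and there are exactly two transverse crossings per turn, transversality following from $g'(P_\ww^j)\ne 0$ together with the strict monotonicity of $\eta$ along the vertical segment. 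Linearising $g$ at each $P_\ww^j$ and solving via the implicit function theorem places the two crossings on the $n$-th turn at $y$-values $y_j^{(n)}\sim e^{(x_0-P_\ww^j)/K}\,e^{-2n\pi/K}$, so choosing $a$ slightly less than $(x_0-P_\ww^2)/K$ and $b$ slightly greater than $(x_0-P_\ww^1)/K$ (shifting the indexing of $n$ if needed to keep $a,b>0$) yields intervals $\mathcal{I}_n$ of the prescribed form whose image under $\eta$ crosses the graph twice transversely.

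For (\ref{disjoint}), the intervals $\mathcal{I}_n$ and $\mathcal{I}_{n+1}$ are disjoint precisely when $b-a<2\pi/K$; with the choice above $b-a$ is close to $(P_\ww^2-P_\ww^1)/K$, so the hypothesis $K>1$ leaves enough room in the log-scale spacing $2\pi/K$ between consecutive turns for the intervals to remain strictly separated. The main obstacle of the proof is the uniform control in (\ref{intervals}): one must verify that as $n\to\infty$ the two solutions of $y^\delta=g(x_0-K\ln y-2n\pi)$ stay simple and can be trapped inside a common log-scale window $[e^a,e^b]$ independently of $n$. Once this uniform asymptotic analysis is in place, the other three items are elementary consequences of the explicit formula for $\eta$ written above.
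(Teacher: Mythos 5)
Your proposal is correct and follows essentially the same route as the paper: compute $\eta(x,y)=(x-K\ln y,\,y^{\delta})$ explicitly, read off \ref{horizontal}) and \ref{vertical}) directly, obtain \ref{intervals}) by placing each turn of the helix so that it sits above the graph of $g$ at the two zeros $P_\ww^j$ and below it at the maximum $y_*$ (the paper fixes the offset $m$ once so that the height condition holds for every $n$, which resolves the ``uniform control'' issue you flag), and derive \ref{disjoint}) from $b-a<2\pi/K$. The only cosmetic difference is that with your choice of $a,b$ the disjointness inequality reduces to $P_\ww^2-P_\ww^1<2\pi$ and so does not actually use $K>1$, which is harmless since the statement is only an implication.
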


\begin{proof}
Assertion \ref{horizontal}) is   immediate  from the expression of 
$\eta=\Phi_{\ww } \circ \Psi_{\vv \rightarrow \ww} \circ \Phi_{\vv }$ in coordinates:
$$
\eta(x,y)=\left(x-K \ln y , y^{\delta} \right)
\qquad\mbox{where}\qquad
K= \frac{C_\vv+E_\ww}{E_\vv E_\ww} > 0
\qquad\mbox{and}\qquad
\delta=\delta_\vv \delta_\ww>1 
$$
and assertion \ref{vertical}) follows from Proposition~\ref{Structures}.

For \ref{intervals}) let $y_*$ be the maximum value of $g(x)$ and let $m\in\ZZ$ such that
$x_0-P_\ww^2\le \frac{K}{\delta}\ln y_*+2m\pi$. 
We take $a=(x_0-P_\ww^1-2m\pi)/K$ and  $b=(x_0-P_\ww^2-2m\pi)/K$.
Then $a<b$, the second coordinate of $\eta(x_0,e^b)$ is less than $y_*$,
its first coordinate is $P_\ww^2+2m\pi$, and that of $\eta(x_0,e^a)$ is $P_\ww^1+2m\pi$.
Hence the curve $\eta(\mathcal{I}_0)$ goes across the graph of $g(x)$ that corresponds to  
$W^s(\vv)$ as in figure~\ref{figPropLines}.
The first coordinates of  the end points of  $\eta(\mathcal{I}_n)$ for the other intervals 
are $P_\ww^1+2(m+n)\pi$ and $P_\ww^2+2(m+n)\pi$ and their second coordinates are also less than $y_*$, so each curve $\eta(\mathcal{I}_n)$ also crosses the graph  of $g(x)$ transversely.

If $K>1$, and since $P_\ww^1-P_\ww^2<2\pi<2K\pi$, then 
$x_0-P_\ww^2-2m\pi-2K\pi<x_0-P_\ww^1-2m\pi$ and hence $b-2(n+1)\pi/K<a-2n\pi/K$, 
implying that  $\mathcal{I}_n\cap\mathcal{I}_{n+1}=\emptyset$.
\end{proof}

\begin{figure}
\begin{center}
\includegraphics[height=7cm]{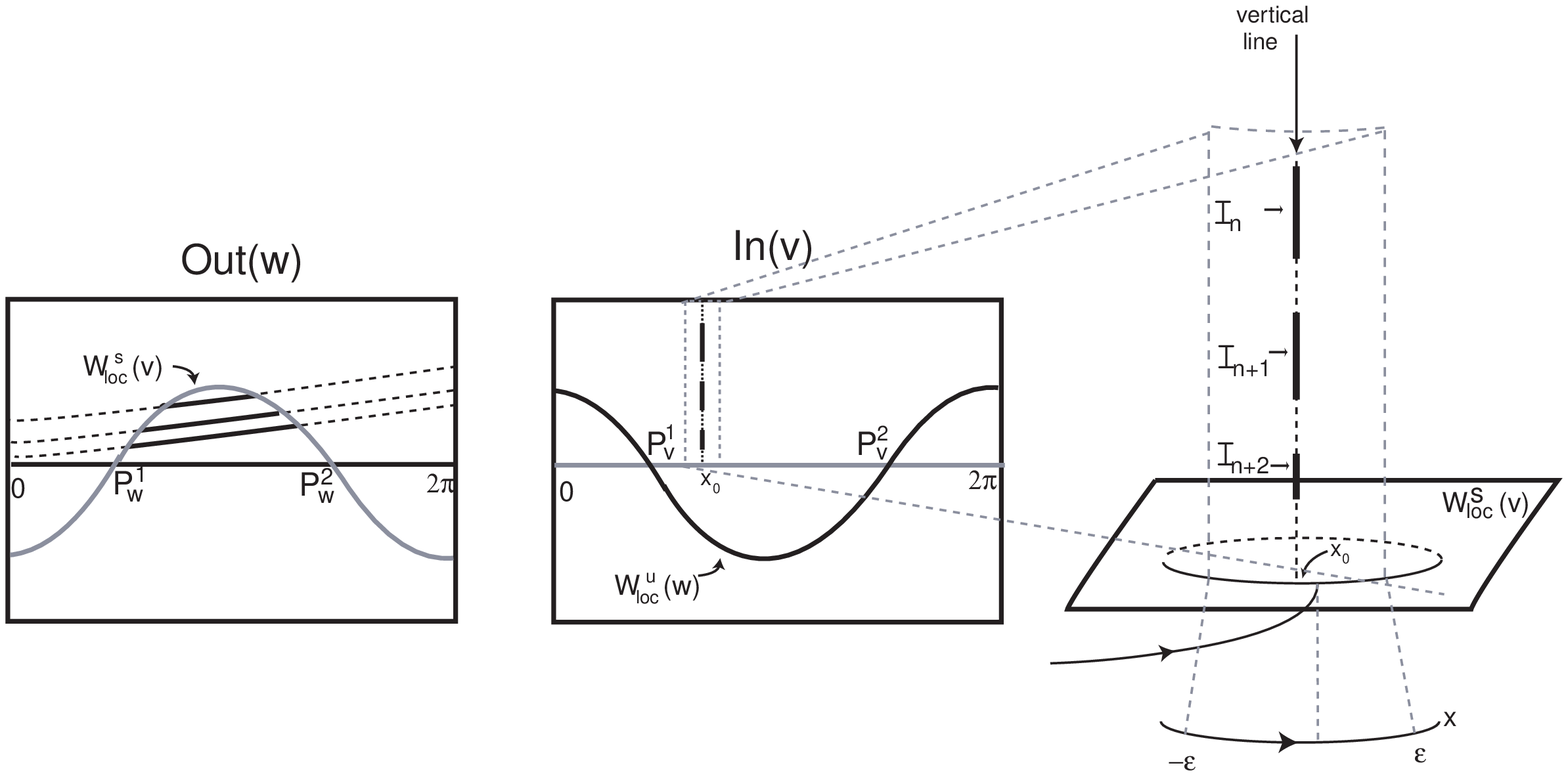}
\end{center}
\caption{\small Proof of \ref{intervals}) in Proposition~\ref{propLines}: the transition map $\eta$ sends the intervals  $\mathcal{I}_n$ into curves that cross $W^s_{loc}(\vv)$ transversely. }
\label{figPropLines}
\end{figure}

We are interested in the images of rectangles in $In(\vv)$ under iteration by
 the first return map to $In(\vv)\backslash W^s_{loc}(\vv)$ given by $ \zeta= \Psi_{\ww \rightarrow \vv}   \circ \eta$.

Consider a rectangle $R_\vv=[P_\vv^1-\tau,P_\vv^1+\tau]\times[\tilde{y},\hat{y}]\subset In(\vv)$ with $\tau>0$ small and $0<\tilde{y}<\hat{y}<1$.  
This  is mapped by $\eta$ into a strip whose boundary  consists of  two horizontal line segments and two pieces of helices. 
A calculation similar to that in Proposition~\ref{propLines} shows that there are many choices of $\tilde{y}$ and $\hat{y}$ for which the strip crosses $W^s_{loc}(\vv)$ transversely near $P_\ww^1$.
This strip is then mapped by $\Psi_{\ww \rightarrow \vv}$ into $In(v)$  crossing $W^s_{loc}(\vv)$ transversely. 
The final strip $\zeta(R_\vv)=\Psi_{\ww \rightarrow \vv}\circ\eta(R_\vv)$ remains close to 
$W^u_{loc}(\ww)$.
Hence the effect of $\zeta$ is to stretch
$R_\vv$ in the vertical direction and map it  with the stretched direction approximately parallel to 
$W^u_{loc}(\ww)$.
Since $R_\vv$ has been chosen to contain a piece of $W^u_{loc}(\ww)$, then $\zeta(R_\vv)$ will cross $R_\vv$.
Repeating this for successive disjoint intervals $[\tilde{y},\hat{y}]$ gives rise to horseshoes.

This is the idea of the proof of the local results of Theorem~\ref{teorema T-point switching}:
in a neighbourhood of the connection point $(P_\vv^1,0)$, one  finds infinitely many disjoint rectangles, each one containing  a Cantor set of points whose orbits under $\zeta$  remain in the Cantor set, and hence return to a neighbourhood of $(P_\vv^1,0)$ for all future iterations.
In the gaps between these rectangles one finds another set of disjoint rectangles that are first mapped by $\zeta$ into a neighbourhood of the other connection point  $(P_\vv^2,0)$.
Repeating the construction near the second connection one obtains the switching of 
Theorem~\ref{teorema switching}.

It can also be shown that the first return map $\zeta$ is uniformly hyperbolic at the points in the rectangle $R_\vv$, with the choices of $\tilde{y},\hat{y}$ above. This means that at each of these points there is a well defined contraction direction and this is the main tool in the proof of 
Theorem~\ref{zero measure}.

Since all this takes place inside a positively invariant neighbourhood $V^0$,
 it would be natural to try to extend this reasoning to larger rectangles in $In(\vv)$.
We finish this section explaining why this fails.

Consider a rectangle $R_\vv$ as above, containing points of the Cantor set. 
Since $\eta$ expands vertical lines, the local unstable manifolds  of these points forms a lamination on $R_\vv$ whose sheets  are approximately vertical. 
Now, take a larger  rectangle $\widehat{R}_\vv$ with $y\in[\tilde{y},\bar{y}]$, $\bar{y}>\hat{y}$, so as to have the maximum of the curve $y=g(x)$ lying in $\eta(\widehat{R}_\vv)$, and increasing $\tau$ if necessary.
The sheets of the lamination still follow the vertical direction in the enlarged rectangle, and their image by $\eta$ is approximately a helix on $Out(\ww)$.
Changing the value of the bifurcation parameter $\lambda$ moves  the graph $y=g(x)$ (the stable manifold of $\vv$) but does not affect the map $\eta$.
Hence, by varying $\lambda$ we can get a sheet of the lamination tangent to $y=g(x)$,
say, $W^u(x_0,y_0)$ for some point $(x_0,y_0)$ in the Cantor set as in figure~\ref{lamina}.
This breaks the hyperbolicity, since it means that $W^u(x_0, y_0)$ is tangent to $W^s(\vv)$.
This phenomenon has been studied by Gonchenko {\sl et al.} \cite{Gonchenko2007} --- it corresponds to a decrease in topological entropy as in figure~\ref{transition1}.
As the images of the rectangles move down, each time one of them crosses a rectangle a sequence of saddle-node bifurcations starts, together with a period-doubling cascade, as on the right hand side of figure~\ref{transition1}.

A more rigorous construction will be made in the next section.
\begin{figure}
\begin{center}
\includegraphics[height=5cm]{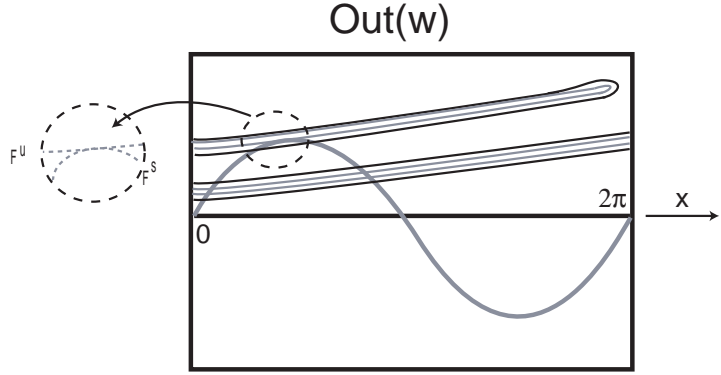}
\end{center}
\caption{\small Near the point of maximum height of $W^s_{loc}(\vv)\cap Out(\ww)$  the unstable manifold of some point is tangent to $W^s(\vv)$. By varying $\lambda$ this point may be taken to be in the Cantor set of points
that remain near the cycle. }
\label{lamina}
\end{figure}

\begin{figure}
\begin{center}
\includegraphics[height=8.7cm]{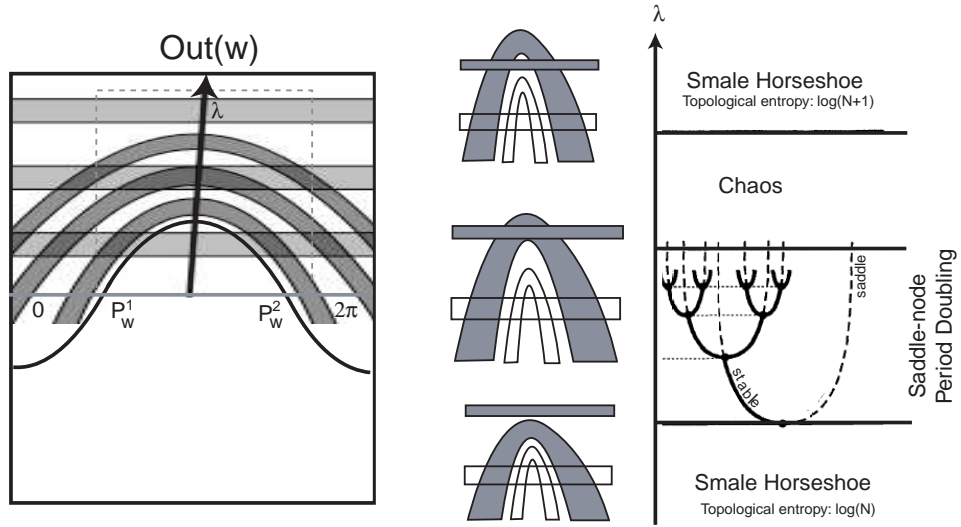}
\end{center}
\caption{\small When $\lambda$ decreases, the Cantor set of points  of the horseshoes
that remain near the cycle is losing topological entropy, as the set loses hyperbolicity.
This happens when   the unstable manifold of some point in the Cantor set is tangent to $W^s(\vv)$.}
\label{transition1}
\end{figure}

\section{Heteroclinic Tangency}
\label{sec tangency}

In this section we show how to find values of the bifurcation parameter $\lambda$ for which $W^u(\ww)$ is tangent to $W^s(\vv)$.

\begin{proof}[Proof of Theorem~\ref{teorema tangency}]
With  the notation  of section~\ref{hyperbolicity}, 
the two points $(P_\ww^1,0)$ and $(P_\ww^2,0)$ divide the closed curve $y=g(x)$
 where $W_{loc}^s(\vv)$ intersects $Out(\ww)$ in two components, corresponding to different signs of the second coordinate.
 With the conventions  of section~\ref{hyperbolicity}, we get $g(x)>0$ for  $x\in\left(P_\ww^1,P_\ww^2\right)$.
Then the region in $Out(\ww)$ delimited by  $W_{loc}^s(\vv)$ and  $W_{loc}^u(\ww)$ between $P_\ww^1$ and $P_\ww^2$ gets mapped by $ \Psi_{\ww \rightarrow \vv}$ into the $y<0$ component of 
$In(\vv)$, while all other points  in  $Out(\ww)$ with positive second coordinates, are mapped ito the $y>0$ component of $In(\vv)$ as in  figure~\ref{Psiwv}.
The maximum value  of the coordinate $y$ for the curve
$W_{loc}^s(\vv)\cap Out(\ww)$ is of the order of $\lambda$, attained at some point 
$(x,y)\approx (x_m,\lambda)$ with $P_\ww^1<x_m<P_\ww^2\pmod{2\pi}$.

Consider now the closed curve where $W_{loc}^u(\ww)$ intersects $In(\vv)$.
For small values of $\lambda$ this is approximately an ellipse, crossing $W_{loc}^s(\vv)$ at the two points $(P_\vv^1,0)$ and $(P_\vv^2,0)$, see figure~\ref{elipse}.
With the conventions  of section~\ref{hyperbolicity}, this is the graph $y=h(x)$ with
 $h(x)>0$ for  $x\in\left(P_\vv^2,P_\vv^1\right)$.
In particular, the portion of this curve that lies in the upper half of $In(\vv)$, parametrised by $(x,y)$, $y>0$, may be written as the union of two segments  $\sigma_1$ and $\sigma_2$ that meet at the point where the coordinate $y$ attains its maximum value on the curve.
Without loss of generality, let $(x_*,\lambda)$ be the coordinates  in $In(\vv)$ of this point, whith 
$P_\vv^2<x_*<P_\vv^1\pmod{2\pi}$.

\begin{figure}
\begin{center}
\includegraphics[height=4.5cm]{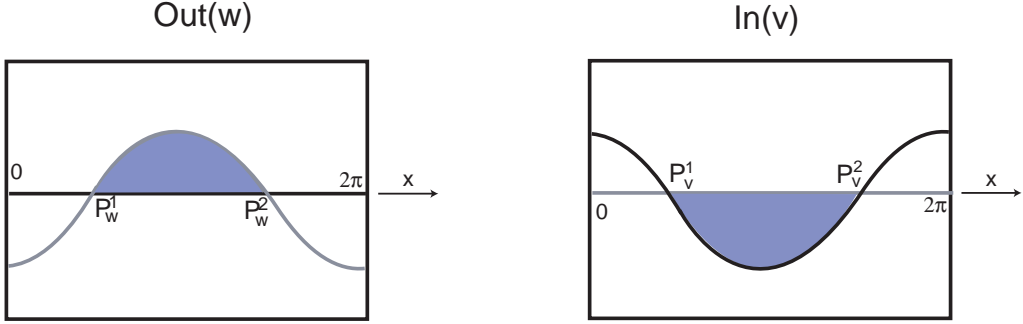}
\end{center}
\caption{\small The transition map $ \Psi_{\ww \rightarrow \vv}$ sends the shaded area in $Out(\ww)$, between  $W_{loc}^s(\vv)$ and  $W_{loc}^u(\ww)$ and with positive second coordinate, into the shaded area in $In(\vv)$ with negative second coordinate. Conventions: black line stands for  $W_{loc}^u(\ww)$, grey line is $W_{loc}^s(\vv)$. The rest of the upper part of $Out(\ww)$ is mapped into the upper part of $In(\vv)$.}
\label{Psiwv}
\end{figure}

By Proposition~\ref{Structures}, the image of each segment $\sigma_j$ by  $\eta$ is a helix on 
$Out(\ww)$ accumulating on $W_{loc}^u(\ww)$.
Hence, the curve $\eta(\sigma_1\cup \sigma_2)$ is a double helix.
The projection of this curve into $W_{loc}^u(\ww)$ is regular at all points, except for a fold at 
$\eta(x_*,\lambda)=(x_*-K\ln\lambda, \lambda^\delta)=(x(\lambda),y(\lambda))$, as in figure~\ref{homoclinic1}.
As $\lambda$ decreases to zero,  the first coordinate $x(\lambda)$ of $\eta(x_*,\lambda)$ tends to infinity, hence the point $\eta(x_*,\lambda)$ makes infinitely many turns around the cylinder $Out(\ww)$. 

On the other hand,   $y(\lambda)= \lambda^\delta$ with $\delta>1$, so $y(\lambda)$ decreases to zero faster than $\lambda$, the maximum height of the curve $W_{loc}^s(\vv)\cap Out(\ww)$.
Therefore, given any small $\lambda_0>0$, there exists a positive $\lambda_1<\lambda_0$ such that 
$x(\lambda_1)=x_m$ and moreover $\eta(x_*,\lambda_1)=(x(\lambda_1),y(\lambda_1))$
lies in the region in $Out(\ww)$ between $W_{loc}^s(\vv)$ and  $W_{loc}^u(\ww)$ 
that gets mapped into the lower part of $In(\vv)$.
For $\lambda=\lambda_1$, the points on the curve $W_{loc}^u(\ww)\cap In(\vv)$ close to $(x_*,\lambda_1)$ are also mapped by $\eta$ into the lower half of $Out(\ww)$.

Furthermore, there exists a positive $\lambda_2<\lambda_1$ such that  $P_\ww^2<x(\lambda_2)<P_\ww^1\pmod{2\pi}$ and hence $\eta(x_*,\lambda_2)=(x(\lambda_2),y(\lambda_2))$ is mapped by  
$ \Psi_{\ww \rightarrow \vv}$ into the upper part of $In(\vv)$. 
Again, for $\lambda=\lambda_2$, points on the curve $W_{loc}^u(\ww)\cap In(\vv)$ close to $(x_*,\lambda)$ return to the upper part of $Out(\ww)$. 

Therefore, for some $\lambda_*$,
with $\lambda_2<\lambda_*<\lambda_1$,  the image of the curve $W_{loc}^u(\ww)\cap In(\vv)$ by the first return map to $In(\vv)$ is tangent to $W_{loc}^s(\vv)\cap In(\vv)$, given in local coordinates by $y=0$. This completes the proof of  Theorem~\ref{teorema tangency} --- given any $\lambda_0>0$ we have found a positive $\lambda_*<\lambda_0$ such that
$W^u(\ww)$ is tangent to $W^w(\vv)$  for  $f_{\lambda_*}$. 
 \end{proof}

\section{Conclusion}
\label{Conclusion}
For the present study, we have used the symmetry $\gamma_1$ and its flow-invariant fixed-point subspace to ensure the persistence of the
connections $[\vv \rightarrow \ww]$ of one-dimensional manifolds.
The symmetry is not essential for our exposition but its existence makes it more natural.
In particular, the networks we describe are persistent within the class of differential equations with the prescribed symmetry.
\bigbreak

As a global structure, the transition from in the dynamics from $\dot{x}=f_0(x)$ to  $\dot{x}=f_\lambda(x)$, $\lambda \approx 0$ is intriguing and has not
always attracted appropriate attention. 
There is a  neighbourhood $V^0$ with positive  Lebesgue  measure that is positively invariant for the flow of $\dot{x}=f_\lambda(x)$. 
For $\lambda=0$ all trajectories approach the network $\Sigma^0$. 
For $\lambda > 0$ and for a sufficiently small  tubular neighbourhood $V^\Gamma \subset V^0 $ of any of  the Bykov cycles in $\Sigma^\lambda$, almost all trajectories might return to 
$V^\Gamma$ but they do not necessarily remain there for all future time. 
Trajectories that remain in $V^\Gamma$ for all future time form an infinite set of suspended horseshoes with zero Lebesgue measure.

For a fixed $\lambda>0$, when we take a larger tubular  neighbourhood $V^\Gamma$ of a Bykov cycle $\Gamma$, the suspended horseshoes lose hyperbolicity.  While small symmetry-breaking terms generically destroy the attracting cycle $\Sigma^0$, there will still be an attractor lying close to the original cycle.
This is the main point of  section~\ref{hyperbolicity}: when local invariant manifolds are extended, they develop tangencies, which explain the attractivity. The existence of primary heteroclinic tangencies is proved in section~\ref{sec tangency}.

Heteroclinic tangencies give rise to attracting periodic trajectories of large periods and small basins of attraction, appearing in large (possibly infinite) numbers.
Heteroclinic tangencies also create new tangencies near them in phase space and for nearby parameter values.
We know very little about the geometry of these strange attractors, we also do not know the size and the shape of their basins of attraction.

 When $\lambda \rightarrow 0$, the infinite number of periodic sinks lying close to the network of Bykov cycles will approach the ghost of $\Sigma^0$. 
For the parameter values where we observe heteroclinic tangencies, each Bykov cycle possesses infinitely many sinks whose basins of attractions have positive three-dimensional  Lesbesgue measure. 
The attractors must lie near $\overline{W^s(\vv) \cup W^u(\ww)}$ and they  collapse into $\Sigma^0$ as $\lambda \rightarrow 0$. 
A lot more needs to be done before the subject is well understood.

\begin{figure}
\begin{center}
\includegraphics[height=4.5cm]{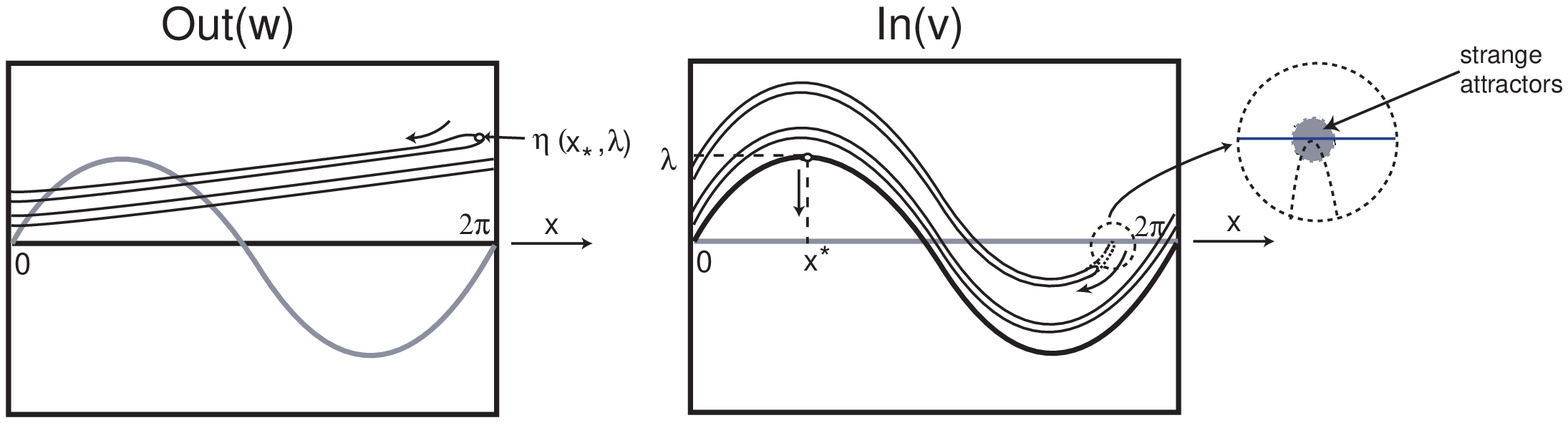}
\end{center}
\caption{In any open neighbourhood of $f_0$ in the space of $\zg{1}$--equivariant vector fields of class $C^1$ on $\EU^3$, there is a sequence of vector fields $f_{\lambda_i}$ accumulating of $f_0$ whose flow  has a heteroclinic tangency between $W^u(\ww)$ and $W^s(\vv)$.}
\label{homoclinic1}
\end{figure}

\bigbreak
\textbf{Acknowledgements:} The authors would like to thank Maria Carvalho for helpful discussions.


\begin{thebibliography}{99}

\bibitem{Afraimovich83} V.S. Afraimovich, L.P. Shilnikov, \emph{Strange attractors and quasiattractors},
in: G.I. Barenblatt, G. Iooss, D.D. Joseph (Eds.), Nonlinear Dynamics and
Turbulence, Pitman, Boston, 1--51, 1983

\bibitem{ACL NONLINEARITY} M. Aguiar, S. B. Castro and I. S. Labouriau, 
\emph{Dynamics near a heteroclinic network,} Nonlinearity, No.  18, 391--414, 2005

\bibitem{ACL2} M. Aguiar, S. B. Castro, I. S. Labouriau, 
\emph{Simple Vector Fields with Complex Behaviour}, Int. Jour. of
Bifurcation and Chaos, Vol. {16}, No. 2, 369--381,  2006


\bibitem{ALR} M. Aguiar, I. S. Labouriau, A. Rodrigues, \emph{Swicthing near a heteroclinic network of rotating nodes}, Dynamical Systems: an International Journal, Vol. 25,  1, 75--95, 2010

\bibitem{BonattiEcia} C.Bonatti, L. D\'\i az, M. Viana, 
\emph{Dynamics beyond uniform hyperbolicity}, Springer-Verlag, Berlin Heidelberg, 2005 

\bibitem{Bowen} R. Bowen, \emph{A horseshoe with positive measure}, Invent. Math. 29, 203--204, 1975

\bibitem{Bowen75} R. Bowen, \emph{Equilibrium States and the Ergodic Theory of Anosov Diffeomorphisms}, Lect. Notes in Math, Springer, 1975

\bibitem{Bykov} V. V. Bykov, \emph{Orbit Structure in a Neighbourhood of a Separatrix Cycle Containing Two Saddle-Foci}, Amer. Math. Soc. Transl, Vol. 200, 87--97, 2000

\bibitem{Colli} E. Colli, \emph{Infinitely many coexisting strange attractors}, Ann. Inst. H. Poincar\'e, Anal. Non Lin\'eaire, 15, 539--579, 1998


\bibitem{Field} M. Field, \emph{Lectures on bifurcations, dynamics and symmetry}, Pitman Research Notes in Mathematics Series, Vol. {356}, Longman, 1996

\bibitem{GS} M. Golubitsky, I. Stewart, \emph{The Symmetry Perspective}, Birkhauser, 2000

\bibitem{Gonchenko2007} S. V. Gonchenko, L. P. Shilnikov, D. V. Turaev, \emph{Quasiattractors and Homoclinic Tangencies}, Computers  Math.  Applic.  Vol.  34, No.  2-4,  195--227,  1997 


\bibitem{Gonchenko2012} S.V. Gonchenko, I.I. Ovsyannikov, D. V. Turaev, \emph{On the effect of invisibility of stable periodic orbits at homoclinic bifurcations}, Physica D, 241, 1115--1122, 2012

\bibitem{Homburg} A. J. Homburg, \emph{Periodic attractors, strange attractors and hyperbolic dynamics near homoclinic orbit to a saddle-focus equilibria}, Nonlinearity 15, 411--428, 2002

\bibitem{HS} A. J. Homburg, B. Sandstede, \emph{Homoclinic and Heteroclinic Bifurcations in Vector Fields}, Handbook of Dynamical Systems, Vol. 3, North Holland, Amsterdam, 379--524, 2010

\bibitem{KM1} M. Krupa, I. Melbourne, \emph{Asymptotic Stability of Heteroclinic Cycles in Systems with Symmetry, }Ergodic Theory
and Dynam. Sys., Vol. {15}, 121--147, 1995


\bibitem{LR} I. S. Labouriau, A. A. P. Rodrigues, \emph{Global generic dynamics close to symmetry}, Journal of Differential Equations, Journal of Differential Equations, Vol. 253 (8), 2527--2557, 2012

\bibitem{Lamb2005} J. S. W. Lamb, M. A. Teixeira, K. N. Webster, \emph{Heteroclinic bifurcations near Hopf-zero
bifurcation in reversible vector fields in $\textbf{R}^3$}, Journal of Differential Equations, 219, 78--115, 2005

\bibitem{MPR} I. Melbourne, M. R. E. Proctor and A. M. Rucklidge, \emph{A heteroclinic model of geodynamo reversals
and excursions}, Dynamo and Dynamics, a Mathematical Challenge (eds. P. Chossat, D. Armbruster
and I. Oprea, Kluwer: Dordrecht, 363--370, 2001


\bibitem{MV} L. Mora, M. Viana, \emph{Abundance of strange attractors}, Acta Math. 171, 1--71, 1993


 \bibitem{Newhouse74} S.E. Newhouse, \emph{Diffeomorphisms with infinitely many sinks}, Topology 13
9--18, 1974

\bibitem{Newhouse79} S.E. Newhouse, \emph{The abundance of wild hyperbolic sets and non-smooth stable
sets for diffeomorphisms}, Publ. Math. Inst. Hautes Etudes Sci. 50, 101--151, 1979


\bibitem{OS} I. M. Ovsyannikov, L. P. Shilnikov, \emph{On systems with saddle-focus homoclinic curve}, Math. USSR, Sbornik, 58, 557--574, 1987

\bibitem{Rodrigues3} A. A. P. Rodrigues, \emph{Repelling dynamics near a Bykov cycle}, Journal of Dynamics and Differential Equations, (to appear), 2013

\bibitem{LR2}  A. A. P. Rodrigues, I. S. Labouriau, \emph{Spiraling sets near a heteroclinic network}, Preprint - CMUP n. 2011-22 available at
 http://cmup.fc.up.pt/cmup/v2/view/publications.php?ano=2011\&area=Preprints


\bibitem{Samovol} V. S. Samovol, \emph{Linearization of a system of differential equations in the neighbourhood of a singular point, }Sov.\
Math. Dokl, Vol. {13}, 1255--1959, 1972

\bibitem{Shilnikov63} L. P. Shilnikov, \emph{Some cases of generation of periodic motion from singular trajectories}, Math. USSR Sbornik (61), 103. 443--466, 1963
\bibitem{Shilnikov65} L. P. Shilnikov, \emph{A case of the existence of a denumerable set of periodic motions}, Sov. Math. Dokl, No. 6, 163--166, 1965 
\bibitem{Shilnikov67} L.P. Shilnikov, \emph{On a Poincar\'e--Birkhoff problem}, Math. USSR Sb. 3, 353--371, 1967

\bibitem{Shilnikov67A} L. P. Shilnikov, \emph{The existence of a denumerable set of periodic motions in four dimensional space in an extended neighbourhood of a saddle-focus}, Sovit Math. Dokl., 8(1),  54--58, 1967

\bibitem{Wiggins} S. Wiggins, \emph{Introduction to Applied Nonlinear Dynamical Systems and Chaos}, Springer-Verlag, TAM 2, New York, 1990



\end{thebibliography}
\end{document}